\documentclass[11pt]{article}

\usepackage{amsmath}
\usepackage{amsthm}
\usepackage{amsfonts}
\usepackage{amssymb}
\usepackage{latexsym}
\usepackage{cite}

\usepackage[noautoscale]{youngtab}

\usepackage{fullpage}

\usepackage{color, colortbl}
\usepackage{array}
\usepackage{multirow}
\usepackage{float}
\usepackage{tikz}
\usepackage{enumitem}

\numberwithin{equation}{section}

\newtheorem{thm}{Theorem}[section]
\newtheorem{lem}[thm]{Lemma}

\newtheorem{cor}[thm]{Corollary}

\newtheorem{rem}[thm]{Remark}
\newtheorem{deff}[thm]{Definition}
\newtheorem{exam}[thm]{Example}

\newtheorem{algorithm}[thm]{Algorithm}

\DeclareMathOperator{\ord}{ord}
\DeclareMathOperator{\lcm}{lcm}

\def\bF{{\mathbb F}}

\usepackage[colorlinks=true,
linkcolor=webgreen,
filecolor=webbrown,
citecolor=webgreen]{hyperref}

\definecolor{webgreen}{rgb}{0,.5,0}
\definecolor{webbrown}{rgb}{.6,0,0}

\usepackage[symbol]{footmisc}

\begin{document}

\title{Cycle structures and parities of linearized permutation polynomials~\footnote{H.\ Bian and S.\ Chen were partially supported by
the Strategic Priority Research Program of the Chinese Academy of
Sciences (No.~XDB0510201) and the NSFC grants (No.~12271511).}~\footnote{D.\ Zheng was partially supported by the NSFC grants (No.~62272148).}}

\author{Huajun Bian\footnote{KLMM, Academy of Mathematics and Systems Science, Chinese Academy of Sciences, Beijing 100190, China. Email: bianhuajun@amss.ac.cn.},
Shaoshi Chen\footnote{KLMM, Academy of Mathematics and Systems Science, Chinese Academy of Sciences, Beijing 100190, China.  Email: schen@amss.ac.cn.} and
Dabin Zheng\footnote{Hubei Province Key Laboratory of Applied Mathematics, Faculty of Mathematics and Statistics, Hubei University, Wuhan 430062, China. Email: dzheng@hubu.edu.cn.}}

\date{\today}

\maketitle

\noindent\textbf{Abstract.}

\qquad

Linearized permutation polynomials over finite fields have important applications in coding theory, cryptography, and combinatorics. In this paper, we give an explicit description of the cycle structure of a general linearized permutation polynomial in terms of the Jordan blocks of its Dickson matrix, thereby providing a Dickson-matrix formulation of a
question raised by Mullen and Vaughan in 1988. Building on the cycle structure, we establish a matrix-theoretic framework to characterize their parity. For finite fields of characteristic 2, we determine all odd linearized permutation polynomials explicitly. For fields of odd characteristic, we derive parity criteria in terms of the eigenvalue structure of the associated Dickson matrix. Based on the parity criteria, we then give an algorithm for deciding the parity of a given linearized permutation polynomial.

\qquad


\noindent {\bf Keywords:}
cycle decomposition; Dickson matrices; linearized permutation polynomials; parity



\section{Introduction}\label{sec1}

Permutation polynomials over finite fields have been extensively studied in the past decades~\cite{MR1429394,MR2358760}, due to their applications in coding theory~\cite{MR2284666,MR3154016}, cryptography~\cite{MR4216654}, finite geometry~\cite{MR4847599}, combinatorial design~\cite{MR2259075}, and many other areas. 
A permutation polynomial $f(x)$ over a finite field $\bF_{q^n}$ is called a \emph{linearized permutation polynomial} if it is of the form
\[
f(x)=\sum\limits_{i=0}^{n-1}a_ix^{q^i},
\quad \text{where $a_i \in \bF_{q^n}$}.
\]
It induces a $\bF_q$-linear map of $\bF_{q^n}$, which is viewed as a linear space over $\bF_q$.  The study of linearized permutation polynomials has been motivated by their appearance as linear maps in the construction of MRD codes~\cite{MR514618,MR791529} and as building blocks in the theory of planar functions~\cite{MR226486,MR1432296}.

The cycle structure of a permutation polynomial $f(x)$, viewed as a permutation of $\bF_{q^n}$,  describes the decomposition of $\bF_{q^n}$ into orbits under the action of $f(x)$. It determines, in particular, the compositional order and the parity of the induced permutation. Explicit cycle decompositions are known  for only a few families. For the simplest case when $f(x)$ is a monomial $x^k$, the problem reduces to power maps on the cyclic group $\bF_{q^n}^{*}$ as investigated by Ahmad~\cite{Ahmad1969}. Mullen and Vaughan determined the cycles of linearized permutation polynomials with coefficients in the base field $\bF_q$~\cite{MR959697} in 1988, and inspired several subsequent studies. Lidl and Mullen treated Dickson permutation polynomials \cite{MR1159877}. Subsequent studies considered broader families of permutation functions \cite{MR2966683}, Rédei permutations \cite{MR4412709}, and permutations whose cycle structures are invariant on suitable affine subspaces\cite{MR4149368}.  For instance, Zeng et al. in~\cite{MR4570366} determined the cycle structure of several classes of permutation polynomials over quadratic extensions of finite fields. They subsequently studied the permutation binomials 
\[
  f(x)=\theta x^{2^m+2}+x^{2^{m+1}+1}
\]
over $\bF_{2^{2m}}$ and obtained their complete cycle structure by analyzing the periods of certain auxiliary sequences~\cite{MR4948432}. These results illustrate that the cycle decomposition of a sparse permutation polynomial may depend delicately on its parameters and on arithmetic properties of the extension degree.

Linear maps over finite fields have also been studied as finite dynamical systems. Elspas described the cycles of linear sequential networks over prime fields in terms of the elementary divisors of their transition matrices \cite{Elspas1959}. Hernández Toledo later placed these results in a general framework over arbitrary finite fields, and its periodic behavior can be recovered from the elementary divisors of the associated matrix \cite{HernandezToledo2005}. For an invertible linear map, the functional graph consists entirely of cycles, so this theory provides a general method for determining its cycle structure.

A more specialized description is available when the coefficients of a linearized permutation polynomial lie in the base field $\mathbb F_q$. In this case, $f$ is a polynomial in the Frobenius operator $T:x\mapsto x^q$. Mullen and Vaughan exploited the $T$-invariant decomposition of $\mathbb F_{q^n}$ to determine the cycle structure and to characterize cycle equivalence and strong cycle equivalence for such polynomials~\cite{MR959697}. They also raised the question of extending this analysis beyond the base-field coefficient case. In the present paper, we give a formulation that applies to arbitrary linearized permutation polynomials: using the similarity between the induced $\mathbb F_q$-linear transformation and its Dickson matrix, we express the number of cycles of each length explicitly in terms of the Jordan blocks of the Dickson matrix. This provides a Dickson-matrix treatment of the question considered by Mullen and Vaughan.

The parity of the induced permutation is another principal object of this paper. The sign of an algebraically defined permutation may encode nontrivial arithmetic information, as exemplified by Zolotarev's interpretation of the Legendre symbol \cite{Zolotareff1872}. Moreover, parity phenomena play an important role in the structural and enumerative theory of Latin squares \cite{MR1309160}, providing further motivation for studying the parity of permutations over finite fields. Starting from the cycle structure obtained through the Dickson matrix, we derive explicit parity criteria for linearized permutation polynomials. In characteristic two, we determine all odd linearized permutation polynomials. In odd characteristic, we characterize the parity in terms of the multiplicative orders of the eigenvalues of the Dickson matrix and their algebraic multiplicities. We then turn these criteria into algorithms for deciding whether a given linearized permutation polynomial is even or odd; when all coefficients lie in $\mathbb F_q$, the algorithm can be carried out directly in $\mathbb F_q[x]/(x^n-1)$, without explicitly constructing the Dickson matrix.

The structure of this paper is arranged as follows. In Section~\ref{sec2}, we recall some basic terminologies and  results on linearized permutation polynomials. In Section~\ref{sec3}, we derive the cycle-counting formula in terms of the
Jordan blocks of the Dickson matrix. In Section~\ref{sec4}, we give criteria for determining the parity of a linearized permutation polynomial. In Section~\ref{sec5}, we offer an algorithm for testing
parity and an illustrative example.

\section{Linearized polynomials and Dickson matrices}\label{sec2}

This section recalls basic terminology about linearized polynomials and Dickson matrices from~\cite{MR1429394}. Throughout this paper, let $q$ be a power of some prime number $p$ and $\bF_q$ be the finite field with $q$ elements. Any polynomial $f(x) \in\bF_q[x]$ induces a map from $\bF_q$ to $\bF_q$ given by $c \mapsto f(c)$. Conversely, every map $\bF_q\to\bF_q$ can be represented by a unique polynomial with degree less than or equal to $q-1$, and two polynomials induce the same map if and only if they are congruent modulo $x^q-x$. A polynomial $f(x)\in\bF_q[x]$ is called a \emph{permutation polynomial} if the induced map is a bijection.
\begin{deff}[Linearized polynomials]
Let $n$ be a positive integer. A polynomial $f\in\bF_{q^n}[x]$ is $q$-linearized if it has the form
\begin{equation}\label{eq:linearized}
\sum\limits_{i=0}^{n-1}a_ix^{q^i},  \quad \text{where $a_i \in \bF_{q^n}$}.
\end{equation}
Every polynomial of the form~\eqref{eq:linearized} induces an $\mathbb F_q$-linear map on $\mathbb F_{q^n}$; conversely, every such linear map has a unique representation of the form~\eqref{eq:linearized}. Such a polynomial is also called a $q$-polynomial. If it induces a permutation of $\mathbb F_{q^n}$, we call it a $q$-linearized permutation polynomial.
\end{deff}
\begin{exam}\label{exam2.2}
$f(x)=x^9+x^3-x\in\bF_{27}[x]$ is a 3-linearized permutation polynomial of $\bF_{27}$.
\end{exam}
To each $q$-polynomial $f(x)=\sum\limits_{i=0}^{n-1}a_ix^{q^i}$,  Dickson~\cite{Dickson1896} introduced the associated Dickson matrix
\begin{align}\label{1.1}
D_f =
    \begin{pmatrix}
        a_0            & a_1            & a_2            & \cdots & a_{n-1}            \\
        a_{n-1}^{q}    & a_0^{q}        & a_1^{q}        & \cdots & a_{n-2}^{q}        \\
        \vdots         & \vdots         & \vdots         & \ddots & \vdots             \\
        a_1^{q^{n-1}}  & a_2^{q^{n-1}} & a_3^{q^{n-1}} & \cdots & a_0^{q^{n-1}}
    \end{pmatrix}.
\end{align}
The basic properties of Dickson matrices are summarized in the following theorem.
\begin{thm}\label{thm2.3}
Let $f(x)$ be a $q$-linearized polynomial over $\bF_{q^n}$ with its associated Dickson matrix $D_f$. Then
\begin{enumerate}[label=\upshape(\roman*)]
\item $f(x)$ is a permutation polynomial if and only if $D_f$ is invertible.
\item $D_f$ is $\bF_{q^n}$-similar to the matrix of the $\bF_q$-linear map $f: \bF_{q^n} \to \bF_{q^n}$ with respect to any basis of $\bF_{q^n}$ over $\bF_q$.
\end{enumerate}
\end{thm}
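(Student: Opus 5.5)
The plan is to prove (ii) first and then read off (i) from it. I will work over the normal-basis-free description: fix any basis $\beta_1,\dots,\beta_n$ of $\bF_{q^n}$ over $\bF_q$ and form the Moore matrix
\[
M=\bigl(\beta_j^{q^{i}}\bigr)_{0\le i\le n-1,\;1\le j\le n}.
\]
A standard fact is that $M$ is invertible. Indeed, if $\sum_i c_i\beta_j^{q^i}=0$ for all $j$ with $c_i\in\bF_{q^n}$, then the $q$-polynomial $\sum_i c_ix^{q^i}$ vanishes on all of $\bF_{q^n}$. It has degree $<q^n$, so it is the zero polynomial. Hence the rows of $M$ are independent; this is the Dedekind independence of the automorphisms $x\mapsto x^{q^i}$.

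The key computation is to apply $f$ and its Frobenius conjugates to the basis. For each $k$ we have $f(x)^{q^k}=\sum_i a_i^{q^k}x^{q^{i+k}}$, with indices taken mod $n$, because $x^{q^n}=x$ on $\bF_{q^n}$. Reindexing by $l=i+k$ gives
\[
f(\beta_j)^{q^k}=\sum_{l} a_{l-k}^{q^k}\beta_j^{q^l},
\]
and $a_{l-k}^{q^k}$ is exactly the $(k,l)$ entry of $D_f$ in \eqref{1.1}. So the matrix $N=\bigl(f(\beta_j)^{q^k}\bigr)_{k,j}$ satisfies $N=D_fM$.

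On the other hand, let $A\in\bF_q^{n\times n}$ be the matrix of $f$ in the basis $\{\beta_j\}$, so that $f(\beta_j)=\sum_m A_{mj}\beta_m$. Since $A_{mj}\in\bF_q$ is fixed by Frobenius, raising to the $q^k$ gives $f(\beta_j)^{q^k}=\sum_m \beta_m^{q^k}A_{mj}$, that is, $N=MA$. Combining the two expressions yields $D_f=MAM^{-1}$, which proves (ii) with similarity over $\bF_{q^n}$.

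For (i), $f$ is a permutation of the finite set $\bF_{q^n}$ exactly when the linear map is injective, that is, when $\det A\neq0$. By the similarity this is equivalent to $\det D_f\ne0$.

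The only delicate point is the index bookkeeping in the reindexing step: matching $a_{l-k}^{q^k}$ with the $(k,l)$ entry of \eqref{1.1} under the cyclic convention, and choosing rows rather than columns so that the product comes out as $D_fM$ rather than $MD_f^{T}$. The invertibility of $M$ is the one genuinely nontrivial input, and it is handled by the degree argument above.
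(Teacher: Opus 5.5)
Your argument is correct, and it takes a different route from the paper simply because the paper gives no argument here. It cites Dickson's thesis (via Lidl--Niederreiter) for (i) and Lemma 4.1 of Wu--Liu for (ii), treating them as two separate known facts.

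Your proof is self-contained. The intertwining $D_fM=N=MA$, with $M$ the Moore matrix of the chosen basis, proves (ii) and exhibits the conjugating matrix over $\bF_{q^n}$ explicitly. Part (i) then follows as a corollary of (ii), so Dickson's criterion does not have to be imported separately. Your index check $(D_f)_{k,l}=a_{l-k}^{q^k}$ against the displayed matrix is right.

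In the nonsingularity of $M$, spell out two short steps:
\begin{enumerate}[label=\upshape(\alph*)]
\item $\sum_i c_ix^{q^i}$ vanishes on all of $\bF_{q^n}$ because it is $\bF_q$-linear and vanishes on a basis.
\item A nonzero polynomial of degree at most $q^{n-1}$ has at most $q^{n-1}<q^n$ roots, so this polynomial must be zero.
\end{enumerate}
These only make your terse ``degree $<q^n$'' remark precise; nothing is missing.

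Your explicit form also gives something the paper uses in Section~3 without comment. From $D_f^k-E_n=M(A^k-E_n)M^{-1}$, and since rank is unchanged under field extension, $\dim_{\bF_q}\ker(A^k-E_n)$ equals the kernel dimension of $D_f^k-E_n$ over $\bF_{q^n}$ or over its algebraic closure. This is exactly what justifies equation (3.2) and the later passage to Jordan forms.
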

\begin{proof}
The first assertion was proved by Dickson in his thesis~\cite{Dickson1896} (one can also see~\cite[page 362]{MR1429394}).
The second assertion is Lemma 4.1 in~\cite{MR3044095}.
\end{proof}

Let $A\in GL_n(\bF_q)$ be an invertible matrix, since the characteristic polynomial of $A$ splits over a finite extension of $\bF_q$, all eigenvalues of $A$
belong to $\bF_{q^m}$ for some positive integer $m$. In particular, the multiplicative order of each eigenvalue is well defined. We define the order set of $A$ by $S(A):=\left\{\ord(\lambda):\lambda \text{ is an eigenvalue of } A\right\}$, where $\ord(\lambda)$ denotes the multiplicative order of $\lambda$. We further define the totalized order set of $A$ by $T(A):=\left\{\lcm(E):E\subseteq S(A)\right\}$, where the least common multiple of a finite set of positive integers is defined to be the least common multiple of all elements of it, with the convention that $\lcm(\emptyset)=1$. 

\begin{lem}\label{lem2.4}
Every element of $S(A)$ is relatively prime to $p$. Moreover, $T(A)$ is closed under the least common multiple operation.
\end{lem}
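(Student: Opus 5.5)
Both assertions follow directly from the definitions, so the plan is a short verification of each part.

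For the first assertion, let $\lambda$ be an eigenvalue of $A$. Since $A\in GL_n(\bF_q)$, its determinant is nonzero, and $\det A$ is the product of the eigenvalues; hence $\lambda\neq 0$. Moreover $\lambda$ lies in some finite extension $\bF_{q^m}$, as noted before the lemma. Thus $\lambda\in\bF_{q^m}^{*}$, a group of order $q^m-1$, and by Lagrange's theorem $\ord(\lambda)$ divides $q^m-1$. Because $q$ is a power of $p$, we have $q^m-1\equiv -1 \pmod p$, so $p\nmid q^m-1$. Any divisor of $q^m-1$ is therefore coprime to $p$, and in particular $\gcd(\ord(\lambda),p)=1$.

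For the second assertion, take $a,b\in T(A)$. By definition there are subsets $E_1,E_2\subseteq S(A)$ with $a=\lcm(E_1)$ and $b=\lcm(E_2)$. I would then check that
\[
\lcm(a,b)=\lcm(E_1\cup E_2),
\]
using associativity of $\lcm$: both sides are the least positive integer divisible by every element of $E_1\cup E_2$. The empty-set convention $\lcm(\emptyset)=1$ is consistent with this, since $\lcm(1,b)=b$. Since $E_1\cup E_2\subseteq S(A)$, it follows that $\lcm(a,b)\in T(A)$. Because $S(A)$ is finite, so is $T(A)$, and induction on the number of arguments gives closure under the $\lcm$ of any finite subset of $T(A)$.

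There is no real obstacle here. The only point needing care is confirming that each eigenvalue is nonzero, which comes from the invertibility of $A$, so that the multiplicative order is defined and Lagrange's theorem applies.
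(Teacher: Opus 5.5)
Your proof is correct and follows the paper's argument essentially step for step: $\ord(\lambda)$ divides $q^m-1$, which is coprime to $p$, and $\lcm(\lcm(E_1),\lcm(E_2))=\lcm(E_1\cup E_2)$ gives closure of $T(A)$. Your explicit observation that $\lambda\neq 0$ because $A$ is invertible fills in a step the paper leaves implicit.
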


\begin{proof}
Let $\lambda$ be an eigenvalue of $A$. Since $\lambda\in\bF_{q^m}$ for some $m$, it belongs to the finite cyclic group $\bF_{q^m}^{\times}$, whose order is $q^m-1$. Hence $\ord(\lambda)\mid(q^m-1)$. Because $q^m-1$ is relatively prime to $p$, every element of $S(A)$ is relatively prime to $p$.

To prove the second assertion, let $r_1,r_2\in T(A)$. By definition, there exist subsets $E_1,E_2\subseteq S(A)$ such that $r_1=\lcm(E_1)$ and $r_2=\lcm(E_2)$. Therefore, 
\[
\lcm(r_1,r_2)=\lcm\bigl(\lcm(E_1),\lcm(E_2)\bigr)=\lcm(E_1\cup E_2).
\]
Since $E_1\cup E_2\subseteq S(A)$, it follows that $\lcm(r_1,r_2)\in T(A)$. Thus $T(A)$ is closed under the least common multiple operation.
\end{proof}

Throughout the paper, we use the notation
\[
S:=S(D_f)=\{\theta_1,\ldots,\theta_s\}
\]
and write
\[
T:=T(D_f)
\]
for short. For each $i\in\{1,\ldots,s\}$, define
\[
u_i
:=
\sum_{\substack{\lambda\text{ is an eigenvalue of }D_f\\
                 \ord(\lambda)=\theta_i}}
\operatorname{mult}_{D_f}(\lambda),
\]
where $\operatorname{mult}_{D_f}(\lambda)$ denotes the algebraic multiplicity of $\lambda$ as an eigenvalue of $D_f$. We call $u_i$ the order weight of $\theta_i$.

\begin{exam}
Let $f(x)=x^9+x^3-x\in\bF_{27}[x]$. The associated Dickson matrix is
\[
D_f=
\begin{pmatrix}
-1 & 1 & 1\\
1 & -1 & 1\\
1 & 1 & -1
\end{pmatrix}.
\]
The characteristic polynomial is
\begin{align*}
\det(\lambda E_3-D_f)
&= \det\begin{pmatrix} 1+\lambda & -1 & -1 \\ -1 & 1+\lambda & -1 \\ -1 & -1 & 1+\lambda \end{pmatrix} \\
&= (\lambda - 1)^3.
\end{align*}
Hence $D_f$ has a unique eigenvalue $\lambda = 1$ with algebraic multiplicity $3$.
The set of distinct orders of eigenvalues of $D_f$ is $\{\theta_1\}=\{1\}$ and the corresponding total algebraic multiplicity is $u_1=3$.

\end{exam}

\section{Cycles of linearized permutation polynomials}\label{sec3}

In this section, we investigate the cycle structure of a linearized permutation polynomial $f(x)$ over $\bF_{q^n}$. By Theorem~\ref{thm2.3}, the linear map induced by $f(x)$ is $\bF_{q^n}$-similar to its Dickson matrix $D_f$. This allows us to translate the cycle-counting problem into a question about the Jordan normal form of $D_f$ over the algebraic closure of $\bF_q$.

Let $f(x)$ be a linearized permutation polynomial. For each $a\in\bF_{q^n}$, define $m_a$ to be the minimal positive integer such that $f^{(m_a)}(a)=a$, where $f^{(k)}$ denotes the $k$-th composition of the map induced by $f(x)$. For a positive integer $i$, we define
\[K_i:=\{a\in\bF_{q^n}:m_a=i\}\quad \text{ and } \quad C_i:=\frac{|K_i|}{i},\]
where $|K_i|$ denotes the cardinality of the set $K_i$. 
Then $C_i$ counts the number of cycles with length $i$ in the decomposition of $f$. Since $a \in \bF_{q^n}$ is fixed by $f^{(k)}$ if and only if $m_a \mid k$, we have
\begin{align}\label{eq:fixpoint}
|\{a\in\bF_{q^n}:f^{(k)}(a)=a\}|=\sum_{i \mid k} |K_i|.
\end{align}
On the other hand, since $f$ induces a linear transformation, the map $f^{(k)} - \text{id}$ is also $\bF_q$-linear, and the fixed points set of $f^{(k)}$ is precisely $\ker(f^{(k)} -\text{id})$, which is an $\bF_q$-subspace of $\bF_{q^n}$ with cardinality $q^{\dim(\ker(f^{(k)} - \text{id}))}$. By Theorem~\ref{thm2.3}, the matrix of $f$ is similar to the Dickson matrix $D_f$, hence the matrix of $f^{(k)} - \text{id}$ is similar to $D_f^k - E_n$, where $E_n$ is the $n\times n$ identity matrix. Combining these observations yields
\begin{align}\label{3.2}
\sum\limits_{i\mid k}|K_i|=q^{\dim(\ker(D_f^k - E_n))}.
\end{align}

To evaluate the right hand side of (\ref{3.2}), we analyze the Jordan normal form of $D_f^k-E_n$ over the algebraic closure of $\bF_{q^n}$. Any square matrix is similar to a block diagonal matrix with each block is of the form 
$J_t(\lambda) = \lambda E_t + N_t$,  where $\lambda$ is an eigenvalue of $D_f$ and
\[N_t=\begin{pmatrix}
0&1&&\\
&0&\ddots&\\
&&\ddots&1\\
&&&0
\end{pmatrix}_{t\times t}.\]
Since $\operatorname{ker}(D_f^k - E_n)$ is the direct sum of the kernels of all corresponding Jordan blocks, it suffices to compute the kernel dimension for each block.
\begin{lem}\label{lem3.1}
Let $J_t(\lambda) = \lambda E_t + N_t$ be a Jordan block of order $t$ with eigenvalue $\lambda$, and $p$ be the characteristic of $\bF_q$. For any positive integer $k$,
\[\dim_{\overline{\bF_q}}(\ker(J_t(\lambda)^k - E_t)) =
\begin{cases}
\min(t, p^{v_p(k)}) & \text{if } \lambda^k = 1, \\
0 & \text{if } \lambda^k \neq 1,
\end{cases}\]
where $v_p(k)$ is the $p$-adic valuation of $k$,  i.e., the largest integer $m$ such that $p^m$ divides $k$.
\end{lem}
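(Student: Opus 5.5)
We compute $J_t(\lambda)^k - E_t$ explicitly via the binomial theorem. Since $\lambda E_t$ and $N_t$ commute and $N_t^t=0$,
\[
J_t(\lambda)^k - E_t = (\lambda^k - 1)E_t + \sum_{j=1}^{\min(k,t-1)} \binom{k}{j}\lambda^{k-j} N_t^j .
\]
This matrix is upper triangular with constant diagonal $\lambda^k-1$. If $\lambda^k\neq 1$, its determinant $(\lambda^k-1)^t$ is nonzero, so the kernel is trivial. Note $\lambda\neq 0$ here, because $J_t(\lambda)$ is a block of the invertible matrix $D_f$; in any case, if $\lambda=0$ then $\lambda^k\ne 1$ and this case already covers it.

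Now assume $\lambda^k=1$, so $\lambda\neq 0$. The matrix becomes $M:=\sum_{j\ge 1}\binom{k}{j}\lambda^{k-j}N_t^j$, a polynomial in $N_t$ with zero constant term. Let $j_0$ be the least $j\ge 1$ with $\binom{k}{j}\not\equiv 0 \pmod p$. The key arithmetic input is that $j_0=p^{v_p(k)}$. Write $k=p^e m$ with $p\nmid m$. By Lucas' theorem, or by $(1+x)^k=(1+x^{p^e})^m$ in $\bF_p[x]$, the coefficients $\binom{k}{j}$ vanish mod $p$ for $0<j<p^e$, and $\binom{k}{p^e}\equiv m\not\equiv 0$.

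Hence $M = N_t^{j_0}\,U$, where $U=\binom{k}{j_0}\lambda^{k-j_0}E_t + (\text{terms in } N_t^{\ge 1})$ is invertible, being upper triangular with nonzero diagonal. Therefore $\ker M = \ker N_t^{j_0}$, since $U$ is invertible and commutes with $N_t$. The kernel of $N_t^{j}$ has dimension $\min(t,j)$: for $j<t$ it is spanned by $e_1,\dots,e_j$, and for $j\ge t$ we have $N_t^j=0$. This gives $\dim\ker = \min(t,p^{v_p(k)})$. The case $j_0\ge t$, where $M=0$, is consistent with this, because every term of $M$ then vanishes, as $N_t^j=0$ for $j\ge t$.

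The only nontrivial step is the congruence $\binom{k}{j}\equiv 0 \pmod p$ for $0<j<p^{v_p(k)}$ together with $\binom{k}{p^{v_p(k)}}\not\equiv 0$. I would prove it via the Frobenius identity $(1+x)^{p^e}=1+x^{p^e}$ in characteristic $p$, then expand $(1+x^{p^e})^m$. This gives the coefficient $m$ at $x^{p^e}$ and zero coefficients below it.
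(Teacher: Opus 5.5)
Your proof is correct and takes essentially the same approach as the paper: expand $(\lambda E_t+N_t)^k$ binomially, use Lucas' theorem (or equivalently $(1+x)^k=(1+x^{p^e})^m$) to show the first nonvanishing coefficient occurs at $j=p^{v_p(k)}$, and read off the kernel dimension. Your factorization $M=N_t^{j_0}U$, with $U$ invertible and commuting with $N_t$, also supplies an explicit justification for a step the paper only asserts, namely that a strictly upper triangular matrix whose first nonzero superdiagonal is the $p^{v_p(k)}$-th has a kernel of dimension $p^{v_p(k)}$.
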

\begin{proof} We first consider the case when $\lambda^k \neq 1$. 
Since the only eigenvalue of $J_t(\lambda)$ is $\lambda$, $J_t(\lambda)^k - E_t$ has a unique eigenvalue $\lambda^k-1\neq 0$. Thus $J_t(\lambda)^k - E_t$ is invertible and its kernel is trivial.

We next consider the case when $\lambda^k = 1$.
Since $N_t^t = 0$ and $\lambda E_t$ commutes with $N_t$, we have
\[J_t(\lambda)^k - E_t = {(\lambda E_t + N_t)}^k - E_t = \lambda^kE_t+\sum\limits_{j=1}^{t-1} \binom{k}{j} \lambda^{k-j} N_t^j-E_t=\sum\limits_{j=1}^{t-1} \binom{k}{j} \lambda^{k-j} N_t^j.\]
Thus $J_t(\lambda)^k - E_t$ is strictly upper triangular, with the $j$-th superdiagonal entries all equal to $\binom{k}{j}\lambda^{k-j}$:
\[
J_t(\lambda)^k - E_t =
\begin{pmatrix}
0 & \binom{k}{1}\lambda^{k-1} & \binom{k}{2}\lambda^{k-2} & \cdots & \binom{k}{t-1}\lambda^{k-t+1} \\
0 & 0 & \binom{k}{1}\lambda^{k-1} & \cdots & \binom{k}{t-2}\lambda^{k-t+2} \\
0 & 0 & 0 & \ddots & \vdots \\
\vdots & & & \ddots & \binom{k}{1}\lambda^{k-1} \\
0 & 0 & \cdots & 0 & 0
\end{pmatrix}.
\] By Lucas' theorem on binomial coefficients~\cite{Lucus}, when $v_p(j) < v_p(k)$, writing
\[
k = \sum_{i \ge v_p(k)} k_i p^i, \qquad j = \sum_{i \ge v_p(j)} j_i p^i,
\]
we obtain
\[
\binom{k}{j} \equiv \prod_{i} \binom{k_i}{j_i}
\equiv \binom{0}{j_{v_p(j)}} \prod_{i > v_p(j)} \binom{k_i}{j_i}
\equiv 0 \pmod{p},
\]
since the digit of $k$ at position $v_p(j)$ is $0$ , while the digit of $j$ at that position is $j_{v_p(j)}>0$, forcing $\binom{k_{v_p(j)}}{j_{v_p(j)}} = \binom{0}{j_{v_p(j)}} = 0$. In particular, when $1 \le j < p^{v_p(k)}$, we have $v_p(j) < v_p(k)$, so 
\[
\binom{k}{j} \equiv 0 \pmod{p}.
\]

For $j = p^{v_p(k)}$, Lucas' theorem gives
\[
\binom{k}{j} \equiv \binom{k_{v_p(k)}}{1} \not\equiv 0 \pmod{p}.
\]
Thus $j = p^{v_p(k)}$ is the smallest positive integer $j$ for which $\binom{k}{j} \not\equiv 0 \pmod{p}$. If $p^{v_p(k)}<t$, then the leading nonzero superdiagonal is $p^{v_p(k)}$-th. So the kernel has dimension $p^{v_p(k)}$.
If $p^{v_p(k)}\geq t$, then all coefficients $\binom{k}{j}$ are $0$ in $\bF_q$ for $1\leq j\leq t-1$, whence $J_t(\lambda)^k - E_t=0$, and the kernel has dimension $t$.
Combining both cases gives that $\operatorname{dim}\operatorname{ker}({J_t(\lambda)}^k-E_t)$ is equal to $\min(t, p^{v_p(k)})$ when $\lambda^k = 1$.
\end{proof}
We are now ready to describe the cycle structure of $f(x)$ as a permutation of $\bF_{q^n}$.  Recall that each element in $T$ is either $1$ or the least common multiple of the orders of some eigenvalues
\begin{thm}[Cycle structure]
\label{thm:cycle}
Let $f$ be a linearized permutation polynomial over $\bF_{q^n}$ with Dickson matrix $D_f$. Let $\mathcal{J}(D_f)$ be the multiset of Jordan blocks of $D_f$. Then for each positive integer $\ell$, the number of cycles of length $\ell$ in the permutation induced by $f$ on $\bF_{q^n}$ is
\[
    C_\ell= \frac{1}{\ell} \sum_{d \mid \ell} \mu\!\left(\frac{\ell}{d}\right)q^{\,\beta(d)},
\]
where $\mu$ is the classical Möbius function, and
\[
    \beta(d)
    = \sum_{\substack{J_{t_\lambda}(\lambda) \in \mathcal{J}(D_f)\\ \mathrm{ord}(\lambda) \mid d}}
      \min\!\bigl(t_\lambda,\, p^{v_p(d)}\bigr),
\]
In particular, we have $C_\ell = 0$ unless $\frac{\ell}{p^{v_p({\ell})}} \in T$.
\end{thm}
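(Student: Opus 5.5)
Write $F(k):=\sum_{i\mid k}|K_i|$, the number of fixed points of $f^{(k)}$. The plan is to compute $F(k)$ explicitly, obtain $C_\ell$ by Möbius inversion, and then read off the vanishing statement from the same inversion.

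\textbf{Step 1: compute $F(k)$.} By \eqref{3.2}, $F(k)=q^{\dim\ker(D_f^k-E_n)}$. The dimension of a kernel does not change under field extension, so it may be computed over $\overline{\bF_q}$. Put $D_f$ into Jordan form $\bigoplus J_{t_\lambda}(\lambda)$. Then $D_f^k-E_n$ is similar to $\bigoplus (J_{t_\lambda}(\lambda)^k-E_{t_\lambda})$, and its kernel dimension is the sum of the block kernel dimensions. Lemma~\ref{lem3.1} gives each block's contribution as $\min(t_\lambda,p^{v_p(k)})$ when $\lambda^k=1$, equivalently when $\ord(\lambda)\mid k$, and $0$ otherwise. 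Hence $\dim\ker(D_f^k-E_n)=\beta(k)$ and
\[
\sum_{i\mid k}|K_i|=q^{\beta(k)} \quad\text{for every } k\ge 1.
\]

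\textbf{Step 2: Möbius inversion.} The identity above holds for all positive $k$, so classical Möbius inversion gives $|K_\ell|=\sum_{d\mid\ell}\mu(\ell/d)q^{\beta(d)}$. Dividing by $\ell$ and using $C_\ell=|K_\ell|/\ell$ yields the stated formula.

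\textbf{Step 3: the vanishing claim.} Write $\ell=p^{e}m$ with $p\nmid m$, and suppose $m\notin T$. Set $m'=\lcm\{\theta\in S:\theta\mid m\}$. This lies in $T$, and $m'\mid m$ with $m'\neq m$, so $m'$ is a proper divisor of $m$.

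We compare $\beta(d)$ and $\beta(d')$ for each divisor $d=p^{a}b$ of $\ell$ (with $b\mid m$), where $d'=p^{a}\gcd(b,m')$. By Lemma~\ref{lem2.4}, every order $\ord(\lambda)$ is prime to $p$. So $\ord(\lambda)\mid d$ holds exactly when $\ord(\lambda)\mid b$. In that case $\ord(\lambda)\mid m$, hence $\ord(\lambda)\mid m'$, and therefore $\ord(\lambda)\mid\gcd(b,m')$. Since $v_p(d)=v_p(d')$, the $\min$ terms agree as well, and so $\beta(d)=\beta(d')$.

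Now regroup $|K_\ell|=\sum_{d\mid\ell}\mu(\ell/d)q^{\beta(d)}$. Because $\beta(d)$ depends only on $a$ and $\gcd(b,m')$, fix $a$ and $g\mid m'$ and collect the coefficient $\sum\mu(\ell/d)$ over all $b\mid m$ with $\gcd(b,m')=g$. Equivalently, we show directly that $|K_\ell|=0$: any $x$ with $f^{(\ell)}(x)=x$ already satisfies $f^{(p^e m')}(x)=x$. Indeed, both fixed spaces lie in the same generalized eigenspaces, and on the $\lambda$-part with $\ord(\lambda)\mid m'$ the kernels of $J^{\ell}-E$ and $J^{p^e m'}-E$ coincide by the computation in Lemma~\ref{lem3.1}, which depends only on $v_p$ and on $\lambda^k=1$. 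This equality is exactly $\beta(\ell)=\beta(p^em')$.

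So every point of period dividing $\ell$ has period dividing $p^e m'$. Since $p^e m'$ is a proper divisor of $\ell$, no point has exact period $\ell$, so $K_\ell=\emptyset$ and $C_\ell=0$.

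I expect this last step to be the main point needing care. Steps 1 and 2 are bookkeeping, but Step 3 needs the inclusion $\ker(D_f^{p^em'}-E_n)\subseteq\ker(D_f^\ell-E_n)$ (clear, since $p^em'\mid\ell$) together with the equality of dimensions. That is cleaner than the Möbius-sum cancellation, so I would use this fixed-subspace argument.
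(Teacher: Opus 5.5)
Your proposal is correct. Steps~1 and~2 coincide with the paper's argument: Lemma~\ref{lem3.1} together with \eqref{3.2} gives $\sum_{i\mid k}|K_i|=q^{\beta(k)}$, followed by M\"obius inversion.

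For the vanishing claim you take a genuinely different route. The paper stays inside the inversion formula. It sets $\tilde\ell=\lcm\{t\in T: t\mid\ell\}$ and picks a prime $p'\neq p$ with $v_{p'}(\tilde\ell)<v_{p'}(\ell)=e_1$. It then checks that $\beta(d)=\beta(dp')$ and $\mu(\ell/d)=-\mu(\ell/(dp'))$ for every $d\mid\ell$ with $v_{p'}(d)=e_1-1$, and cancels the M\"obius sum in pairs.

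You need only the single equality $\beta(\ell)=\beta(p^em')$. Since $p^em'\mid\ell$, the fixed space of $f^{(p^em')}$ lies inside that of $f^{(\ell)}$. Both have $q^{\beta}$ elements, so they coincide. Because $p^em'$ is a proper divisor of $\ell$, no point can have exact period $\ell$.

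What your route buys: no auxiliary prime, no sign bookkeeping, and no use of the inversion formula at all. It also gives the slightly sharper fact that the fixed spaces of $f^{(\ell)}$ and $f^{(p^em')}$ are equal. What the paper's route buys: it shows that the closed formula itself evaluates to zero term by term, at the cost of more bookkeeping.

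Two cleanups for a write-up. First, delete the half-finished sentence about regrouping by $\gcd(b,m')$. Showing that those grouped M\"obius coefficients vanish would need its own argument, essentially the paper's pairing, and your direct argument makes it unnecessary. Second, do not assert that the blockwise kernels coincide ``by Lemma~\ref{lem3.1}'', since that lemma only computes dimensions. Rely on the inclusion-plus-equal-cardinality argument you give at the end; it works directly with $\bF_q$-subspaces of $\bF_{q^n}$ and needs no Jordan basis.
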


\begin{proof}
Lemma~\ref{lem3.1} together with \eqref{eq:fixpoint} gives $\sum_{i \mid k}|K_i| = q^{\beta(k)}$ for all $k \geq 1$. Setting $F(k) = \sum_{i \mid k}|K_i|$, the classical Möbius inversion
formula yields
\[
    |K_\ell|
    = \sum_{d \mid \ell} \mu\!\left(\frac{\ell}{d}\right) F(d)
    = \sum_{d \mid \ell} \mu\!\left(\frac{\ell}{d}\right) q^{\beta(d)},
\]
and dividing by $\ell$ gives $C_\ell = |K_\ell|/\ell$.

When $\frac{\ell}{p^{v_p({\ell})}} \notin T$, Take $\tilde{\ell} = \lcm\{t \in T | t \mid \ell\}$. Since $T$ is the set of least common multiples of subsets of $S$ and $\lcm(\lcm(A), \lcm(B)) = \lcm(A \cup B)$ it follows that $\tilde{\ell} \in T$. In particular this forces $\frac{\ell}{v_p(\ell)} \neq \tilde{\ell}$. So there exists a prime $p' \neq p$ dividing $\frac{\ell}{\tilde{\ell}}$. Write $e_1 := v_{p'}(\ell)$.

We claim that for every $d \mid l$ with $v_{p'}(d) = e_1 - 1$,
\[
\beta(d) = \beta(dp'),
\qquad
\mu\!\left(\frac{l}{d}\right) = -\,\mu\!\left(\frac{l}{dp'}\right).
\]

Since $v_{p'}(d) = e_1 - 1$ and $v_{p'}(\ell) = e_1$, we have
$v_{p'}\!\left(\frac{\ell}{d}\right) = 1$, so
$\frac{\ell}{d} = p' \cdot \frac{\ell}{dp'}$ with
$p' \nmid \frac{\ell}{dp'}$. By the multiplicativity of the M\"obius
function on squarefree arguments,
\[
\mu\!\left(\frac{l}{d}\right)
= \mu(p') \, \mu\!\left(\frac{l}{dp'}\right)
= -\,\mu\!\left(\frac{l}{dp'}\right).
\]

Since $d \mid dp'$, $\ord(\lambda) \mid d$ implies $\ord(\lambda) \mid dp'$. Conversely, suppose $\ord(\lambda) \mid dp'\mid \ell$. Since $\ord(\lambda)\in T$, we have $\ord(\lambda)\mid\tilde{\ell}$, thus $v_{p'}(\ord(\lambda))\leq v_{p'}(\tilde{\ell})<e_1$. So $\ord(\lambda) \mid d$. On the other hand, since $\gcd(p,p')=1$, $v_p(d)=v_p(dp')$.  Hence $\beta(d)=\beta(dp')$. Thus
\begin{align*}
|K_\ell|
&= \sum_{d \mid \ell} \mu\!\left(\frac{\ell}{d}\right) q^{\beta(d)}\\
&=\sum_{\substack{d \mid \ell \\ v_{p'}(d) = e_1 - 1}} \mu\!\left(\frac{\ell}{d}\right) q^{\beta(d)}+\sum_{\substack{d \mid \ell \\ v_{p'}(d) = e_1}} \mu\!\left(\frac{\ell}{d}\right) q^{\beta(d)}\\
&=\sum_{\substack{d \mid \ell \\ v_{p'}(d) = e_1 - 1}} \left(\mu\!\left(\frac{\ell}{d}\right) q^{\beta(d)}+\mu\!\left(\frac{\ell}{dp'}\right) q^{\beta(dp')}\right)\\
&=\sum_{\substack{d \mid \ell \\ v_{p'}(d) = e_1 - 1}} \left(\mu\!\left(\frac{\ell}{d}\right) q^{\beta(d)}-\mu\!\left(\frac{\ell}{d}\right) q^{\beta(d)}\right)\\
&=0
\end{align*}
\end{proof}
Thus Theorem~3.2 may be viewed as the Dickson-matrix form of the general cycle-counting formula for invertible linear finite dynamical systems.
\begin{rem}
Assume that the coefficients of $f$ lie in $\bF_q$. Let $T\colon x\mapsto x^q$ be the Frobenius operator on $\bF_{q^n}$. Then there exists a polynomial $\tilde f\in\bF_q[X]$ such that $f(x)=\tilde{f}(T)(x)$. With respect to a normal basis of $\bF_{q^n}$ over $\bF_q$, the Dickson matrix $D_f$ is the matrix of $\tilde{f}(T)$. 

Let $g$ be an irreducible factor of $X^n-1$, and, for $a\geq 1$, put  $W^{(a)}=\ker(g(T)^a)$. The chain $W^{(1)}\subseteq W^{(2)}\subseteq\cdots$ stabilizes, define $W$ to be the stable space. The restriction of $D_f$ to the $W$ has eigenvalues $\tilde{f}(\omega)$, where $\omega$ runs through the roots of $g$. These eigenvalues are Galois conjugate and therefore have the same multiplicative order $j$.

Moreover, ~\cite{MR959697} gives the fixed-point spaces of $f^{jp^a}$ restricted to $W$ is $\ker(g(T)^{p^as})\cap W$, where $g(x)^s\parallel \tilde{f}(x)^j-1$, its cardinality is $q^{\beta(d)}$, in agreement with Theorem~\ref{thm:cycle}.
\end{rem}

\begin{exam}
Let $f(x)=x^9+x^3-x\in\bF_{27}[x]$. The associated Dickson matrix is
\[
D_f=
\begin{pmatrix}
-1 & 1 & 1\\
1 & -1 & 1\\
1 & 1 & -1
\end{pmatrix}.
\]
The Jordan normal form of $D_f$ is
\[
J_{D_f}=
\begin{pmatrix}
1 & 1 & 0\\
0 & 1 & 0\\
0 & 0 & 1
\end{pmatrix}.
\]
Consequently, $\beta(d)$ in Theorem~\ref{thm:cycle} is given by
\[
\beta(d)=
\begin{cases}
2, & \text{if } v_3(d)=0,\\[2mm]
3, & \text{if } v_3(d)\geq 1.
\end{cases}
\]
By Theorem~\ref{thm:cycle}, $C_l=0$ whenever $l$ is not a power of $3$.
It remains to consider the case $l=3^i$.
For $i=0$, we have
\[C_1=1\sum_{d\mid 1}\mu(\frac{1}{d})3^{\beta(d)}=1\mu(1)3^{\beta(1)}=9.\]
For $i\geq 1$, we have
\[C_{3^i}=\frac{1}{3^i}\sum_{d\mid 3^i}\mu\left(\frac{3^i}{d}\right)q^{\beta_d}.\]
Since the only divisors of $3^i$ contributing nonzero Möbius values
are $3^{i-1}$ and $3^i$, we have
\[C_{3^i}=\frac{1}{3^i}\left[\mu(3)q^{\beta_{3^{i-1}}}+\mu(1)q^{\beta_{3^i}}\right]=\frac{1}{3^i}\left(-q^{\beta_{3^{i-1}}}+q^{\beta_{3^i}}\right).\]
Therefore,
\[
C_{3^i}=
\begin{cases}
6, & \text{if } i=1,\\[2mm]
0, & \text{if } i\geq 2.
\end{cases}
\]
In summary,  the cycle structure of $f$ consists of $6$
cycles of length $3$ and $9$ cycles of length $1$.
\end{exam}

\section{Parity criteria for linearized permutation polynomials}\label{sec4}

We now apply the argument about cycle structures to determine the parity of the permutations induced by linearized permutation polynomials over $\bF_{q^n}$.

We define the parity of a permutation polynomial to be the parity of the permutation it induces on $\bF_{q^n}$. Recall that every cycle of length $i$ in the disjoint cycle decomposition of a permutation can be written as a product of $i - 1$ transpositions, for example,
\[(a_1\, a_2\, \cdots\, a_i) = (a_1\, a_i)(a_1\, a_{i-1})\cdots (a_1\, a_2).\]
Hence the permutation induced by a linearized permutation polynomial $f$ decomposes into
\begin{equation*}
    \sum_{i=1}^{q^n} C_i (i-1)
    = \sum_{i=1}^{q^n} \frac{|K_i|}{i}(i-1)
    = q^n - \sum_{i=1}^{q^n} \frac{|K_i|}{i}
\end{equation*}
transpositions.
Since $q^n \equiv q \pmod{2}$, the right hand side has the same parity as
\begin{align}
    Par(f):=q + \sum_{i=1}^{q^n} \frac{|K_i|}{i},
\end{align}
so it suffices to determine the parity of $Par(f)$.

\begin{thm}
\label{thm:parity}
Let $f(x) = \sum_{i=0}^{n-1} a_i x^{q^i}$ be a $q$-linearized permutation polynomial over $\bF_{q^n}$, and let $D_f$ denote its associated Dickson matrix, whose distinct eigenvalue orders are
$\theta_1, \ldots, \theta_s$ with corresponding sums of algebraic multiplicities $u_1, \ldots, u_s$, respectively.
\begin{enumerate}[label=\upshape(\roman*)]
    \item If $\operatorname{char}(\bF_q) = 2$,
    then $f$ is an odd permutation if and only if $n = q
    = 2$ and $f(x) = bx^2$ for some $b\neq 0$.
    \item If $\operatorname{char}(\bF_q) \neq 2$,
    then the parity of $f$ coincides with the parity of
    the integer
    \[
        \sum_{j=1}^{s}
        \frac{q^{u_j} - 1}{2^{v_2(\theta_j)}},
    \]
    where $v_2(\theta_j)$ denotes the $2$-adic valuation
    of $\theta_j$.
\end{enumerate}
\end{thm}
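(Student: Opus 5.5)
We argue directly with the sign homomorphism on $V=\bF_{q^n}$, viewed as an $n$-dimensional $\bF_q$-space. By Theorem~\ref{thm2.3} the parity is a similarity invariant of $D_f$, and the cycle-counting machinery of Theorem~\ref{thm:cycle} (fixed-point spaces being kernels of $D_f^k-E_n$) will be used on suitable pieces. The two characteristics are handled by different mechanisms.

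\emph{Odd characteristic.} First, decompose $V=\bigoplus_{j=1}^s V_j$, where $V_j$ is the sum of the generalized eigenspaces of $f$ for eigenvalues of order $\theta_j$. Galois conjugate eigenvalues have equal order, so each $V_j$ is defined over $\bF_q$. Moreover $\dim_{\bF_q}V_j=u_j$ and $f=\bigoplus_j f_j$ with $f_j=f|_{V_j}$.

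Second, show that the sign is multiplicative over such a decomposition. We factor $A\oplus B=(A\oplus I)(I\oplus B)$. The permutation $A\oplus I$ of $V_1\times V_2$ is $|V_2|$ disjoint copies of $A$, so its sign is $\operatorname{sgn}(A)^{|V_2|}=\operatorname{sgn}(A)$, because $|V_2|=q^{\dim V_2}$ is odd. Hence $\operatorname{sgn}(f)=\prod_j\operatorname{sgn}(f_j)$.

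Third, take the Jordan--Chevalley decomposition $f_j=S_jU_j$ with $S_j$ semisimple, $U_j$ unipotent, both commuting polynomials in $f_j$ over $\bF_q$. The order of $U_j$ is a power of $p$, which is odd, so $U_j$ is an even permutation. All eigenvalues of $S_j$ have order exactly $\theta_j$, so $S_j^k-I$ is invertible whenever $\theta_j\nmid k$ (this is the $\lambda^k\ne1$ case of Lemma~\ref{lem3.1}). Thus every nonzero vector lies on a cycle of length exactly $\theta_j$, and $S_j$ consists of one fixed point and $(q^{u_j}-1)/\theta_j$ cycles of length $\theta_j$. Its sign is therefore $(-1)^{(\theta_j-1)(q^{u_j}-1)/\theta_j}$.

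Finally, compare parities. If $\theta_j$ is odd, the exponent is even, and so is $(q^{u_j}-1)/2^{0}$. If $\theta_j$ is even, the exponent has the parity of $(q^{u_j}-1)/\theta_j$. This has the same parity as $(q^{u_j}-1)/2^{v_2(\theta_j)}$, since the two differ by the odd factor $\theta_j/2^{v_2(\theta_j)}$. Summing over $j$ gives (ii).

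\emph{Characteristic 2.} The sign map $\operatorname{sgn}:GL_n(\bF_q)\to\{\pm1\}$ is a homomorphism, so it factors through the abelianization. For $(n,q)\ne(2,2),(2,3)$, the group $SL_n(\bF_q)$ is perfect (for $n=1$ it is trivial), so $\operatorname{sgn}$ factors through $\det$ into $\bF_q^\times$. That group has odd order $q-1$, so every $f$ is even. The only remaining case with $q$ even is $n=q=2$. Here $GL_2(\bF_2)\cong S_3$ acts faithfully on the three nonzero vectors of $\bF_4$, with $0$ fixed. The odd elements are exactly the three involutions, which are the nonidentity $\bF_2$-linear involutions of $\bF_4$, namely $x\mapsto bx^2$ with $b\in\bF_4^\times$. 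One checks $(bx^2)^{(2)}=b^3x^4=x$ on $\bF_4$, and each of these maps fixes exactly one nonzero element, so it acts as a transposition. This gives (i).

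The main obstacle is justifying the perfectness of $SL_n(\bF_q)$ in characteristic $2$, i.e.\ the exceptional cases of the transvection-generation argument. If we prefer to avoid it, an alternative is to compute $Par(f)$ directly from Theorem~\ref{thm:cycle} with $q$ even. Running the same decomposition as in odd characteristic fails here because $|V_2|$ is even, and that is precisely why the group-theoretic route is preferable.
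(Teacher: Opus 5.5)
Your argument is correct, and it takes a genuinely different route from the paper. The paper never uses that the sign is a group homomorphism. Instead it evaluates $Par(f)=q+\sum_i|K_i|/i$ directly from the cycle counts of Theorem~\ref{thm:cycle}.

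In odd characteristic this requires two lemmas. Lemma~\ref{lem4.3} shows that the sums $\sum_{j\ge0}|K_{p^ji}\setminus\{0\}|$ depend only on the $u_j$, so $D_f$ may be replaced by its diagonal part. The nested induction of Lemma~\ref{lem4.4} over $\Phi(i)$ then shows that periods in $T\setminus S$ contribute even terms and that each $\theta_j$ contributes $(q^{u_j}-1)/2^{v_2(\theta_j)}$ modulo $2$. In characteristic $2$ the paper does a $2$-adic case analysis in the numbers $m_k$ of Jordan blocks of size at least $k$, and then solves for the $2\times 2$ Dickson matrix.

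In your version, multiplicativity of the sign over the $\bF_q$-rational decomposition $V=\bigoplus_j V_j$ (valid because each $|V_j|$ is odd) does the work of Lemma~\ref{lem4.4}. The odd $p$-power order of the unipotent factor $U_j$ does the work of Lemma~\ref{lem4.3}. The free action of $S_j$ on $V_j\setminus\{0\}$ then gives the sign of each block exactly; this also uses $S_j^{\theta_j}=I$, which holds because $S_j$ is semisimple.

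The paper's route reads parity off the same cycle-count formula and stays within linear algebra and elementary number theory. Yours is much shorter and explains conceptually why only the $u_j$ matter and why eigenvalues of different orders do not interact.

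In characteristic $2$, the perfectness of $SL_n(\bF_q)$ that you flag as the main obstacle is a standard theorem, but you can avoid it. By Gaussian elimination, $GL_n(\bF_q)$ is generated by transvections $x\mapsto x+\phi(x)w$ with $\phi(w)=0$, together with $\operatorname{diag}(1,\dots,1,a)$. The diagonal generators have odd order. In characteristic $2$ a transvection is an involution whose fixed-point set is exactly the hyperplane $\ker\phi$. It is therefore a product of $q^{n-1}(q-1)/2$ disjoint transpositions, and for $n\ge 2$ this number is even unless $n=q=2$.
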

The proof naturally splits into two cases according to the characteristic of $\bF_{q^n}$.

\subsection{The case when $\operatorname{char}(\bF_{q^n})=2$}

In this case, $q$ is a power of $2$, hence even. Thus the parity of $Par(f)$ is determined by the parity of
\begin{equation*}
   Par(f)= q + \sum_{i=1}^{q^n} \frac{|K_i|}{i}\equiv \sum_{i=1}^{q^n} \frac{|K_i|}{i} \pmod{2}.
\end{equation*}

By Lemma~\ref{lem2.4}, every element of $T$ is odd. Theorem~\ref{thm:cycle} gives $K_i = \emptyset$ unless $i = 2^t j$ for some $t \geq 0$ and $j \in T$. Consequently,
\begin{align}\label{4.2}
    \sum_{i=1}^{q^n} \frac{|K_i|}{i}\equiv \sum_{t \geq 0} \sum_{j \in T} \frac{|K_{2^t j}|}{2^t j}\equiv \sum_{t \geq 0} \sum_{j \in T} \frac{|K_{2^t j}|}{2^t}\pmod{2},
\end{align}
where the second congruence holds since $j$ is odd.

Let $m_k$ denote the number of Jordan blocks of $D_f$ of size at least $k$, so that $\{m_k\}$ is a non-increasing sequence. Let $r$ be the maximal size of the Jordan blocks of $D_f$, then $m_1\ge m_2\ge \cdots\ge m_r>m_{r+1}=0$. We consider two cases.

\medskip
\noindent Case (i): $r = 1$ (i.e., $D_f$ is diagonalizable).

Here $m_2 = 0$, thus for every even integer $i$ and Jordan block $J$, $J^i-E$ and $J^{i/2}-E$ has the same rank, thus by (\ref{eq:fixpoint}), $K_i = \emptyset$ for all even $i$. Hence
\begin{align*}
\sum_{i=1}^{q^n} \frac{|K_i|}{i}\equiv\sum_{i\in T}\frac{|K_i|}{i}\equiv\sum_{i\in T}|K_i|\equiv q^n\pmod 2.
\end{align*}
and $f$ is an even permutation.

\medskip
\noindent Case (ii): $r > 1$ (i.e., $D_f$ is not diagonalizable).

Let $t$ be the largest integer such that $m_{2^t} > 0$. Expanding (\ref{4.2}) and we have
\begin{align}
\sum_{t\geq 0}\sum_{i\in T}\frac{|K_{{2^t}\cdot i}|}{2^t}
&= \sum_{i\in T}|K_i|+\sum_{t\geq 1}\sum_{i\in T}\frac{|K_{{2^t}\cdot i}|}{2^t}\nonumber\\
&= q^{m_1}+\sum_{j=0}^{t}\frac{1}{2^{j+1}}(q^{m_1+\cdots+m_{2^{j+1}}}-q^{m_1+\cdots+m_{2^j}})\nonumber\\
&\equiv\sum\limits_{j=0}^{t}\frac{q^{m_1+\cdots+m_{2^j}}}{2^{j+1}}(q^{m_{2^j+1}+m_{2^j+2}+\cdots+m_{2^{j+1}}}-1)\nonumber\\
&\equiv\sum\limits_{j=0}^{t-1}\frac{q^{m_1+\cdots+m_{2^j}}}{2^{j+1}}+\frac{q^{m_1+\cdots+m_{2^t}}}{2^{t+1}}(q^{m_{2^t+1}+m_{2^t+2}+\cdots+m_{r}}-1)\pmod 2.\label{4.3}
\end{align}
where the last step uses $q \equiv 0 \pmod{2}$, so $q^k - 1 \equiv 1 \pmod{2}$ for all $k \geq 1$.

We claim that for $2 \leq j \leq t-1$, the term $\dfrac{q^{m_1 + \cdots + m_{2^j}}}{2^{j+1}}$ is even. Indeed, since $\{m_i\}$ is non-increasing and $m_{2^t} \geq 1$,
\[
m_1 + m_2 + \cdots + m_{2^j} \geq 2^j > j+1,
\]
and therefore
\[
v_2\!\left( q^{m_1 + \cdots + m_{2^j}} \right)
\geq v_2\!\left( 2^{m_1 + \cdots + m_{2^j}} \right)
= m_1 + \cdots + m_{2^j} \geq j+1,
\]
so $\dfrac{q^{m_1+\cdots+m_{2^j}}}{2^{j+1}} \in 2\mathbb{Z}$. Hence \eqref{4.3} reduces to
\begin{equation}\label{4.4}
\sum_{i=1}^{q^n} \frac{|K_i|}{i}
\equiv \frac{q^{m_1}}{2} + \frac{q^{m_1+m_2}}{4}(1 - \delta_{m_3,0}) \pmod{2}.
\end{equation}

We further distinguish the following subcases.

\begin{enumerate}[label=\textup{(\alph*)}]
    \item If $q = 2^m$ for some $m > 1$, or if $m_1 > 1$, then
    $\frac{q^{m_1}}{2}$ is even. Moreover, when $m_3 > 0$,
    $\frac{q^{m_1+m_2}}{4}$ is also even.
    Hence the right-hand side of \eqref{4.4} is even and $f$ is an even permutation.

    \item If $q = 2$ and $m_1 = 1$, then since $r > 1$ we have $m_2 = 1$.
    \begin{itemize}
        \item If $m_3 > 0$: the right-hand side of
        \eqref{4.4} equals $\frac{2}{2} + \frac{2^2}{4}
        = 1 + 1 = 2 \equiv 0 \pmod{2}$, so $f$ is even.
        \item If $m_3 = 0$: the right-hand side equals
        $\frac{2}{2} = 1 \equiv 1 \pmod{2}$, so $f$ is odd.
    \end{itemize}
\end{enumerate}

In subcase~(b) with $m_3 = 0$, we have $n = q = 2$ and the minimal polynomial of $D_f$ is $(x - a)^2 = x^2 - a^2$ for some $a \neq 0$. Since this is also the minimal polynomial of the induced linear map
on $\bF_{q^n}$, we have $a^2 \in \bF_2$, forcing $a = 1$. Writing
\[
    D_f = \begin{pmatrix} a_0 & b_0 \\ b_0^2 & a_0^2 \end{pmatrix},
\]
its characteristic polynomial is $\chi_{D_f}(x) = x^2 + (a_0 + a_0^2)x + (a_0^3 + b_0^3)$. For $\chi_{D_f}(x) = x^2 + 1$, we need $a_0 + a_0^2 = 0$, i.e.\ $a_0 \in \bF_2$.
\begin{itemize}
    \item $a_0 = 1$: $f(x) = x$, which is the identity, hence even.
    \item $a_0 = 0$: $f(x) = b_0x^2$, which induces a single transposition
    on $\bF_4$, hence odd.
\end{itemize}

Combining all cases, we conclude:

\begin{cor}\label{cor:p2-parity}
    Let $q$ be a power of $2$. A $q$-linearized permutation polynomial $f$ over $\bF_{q^n}$ is an odd permutation if and only if $n = q = 2$ and $f(x) = bx^2$ for some $b\in\bF_4^*$.
\end{cor}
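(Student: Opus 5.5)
Since the case analysis just carried out for $\operatorname{char}(\bF_q)=2$ already contains all the ingredients, the corollary amounts to assembling those cases; I would nevertheless organize the argument so that it does not depend on the delicate bookkeeping in \eqref{4.3}. The plan is to use the characterization of parity through the number of cycles: since $q$ is even, $f$ is odd exactly when $\sum_i |K_i|/i$ (the total number of cycles) is odd, and $q^n$ is even. By Theorem~\ref{thm:cycle} and Lemma~\ref{lem2.4}, every cycle length has the form $2^a j$ with $j$ odd, and the number of cycles is congruent modulo $2$ to the number of cycles whose length is a power of two times odd part counted as $\sum_{a\ge0}|K_{2^a\cdot *}|/2^a$, as in \eqref{4.2}.

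The cleaner route I would follow: decompose $\bF_{q^n}$ (as $\bF_q$-space) as $U\oplus V$, where $U$ is the part on which $f$ has odd order (semisimple part with odd-order eigenvalues) — more precisely, write $f=su$ with $s$ semisimple of odd order $N$ and $u$ unipotent, commuting (Jordan decomposition over $\bF_q$). Then $f^N=u^N$ is unipotent and $f$ has the same parity as $f^{N}$ composed with the odd-order element $f^{1-N}$... I would instead argue directly: $f^{N}$ has the same sign as $f$ because $N$ is odd, so it suffices to treat unipotent $f$. A unipotent $f$ has order $2^e$, and all cycles have length a power of $2$; its number of fixed points of $f^{2^a}$ is $q^{\dim\ker(f^{2^a}-1)}$. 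Then $|K_1|=q^{m_1}$ and $|K_{2^{a+1}}|=q^{\beta(2^{a+1})}-q^{\beta(2^a)}$, and the cycle count is $q^{m_1}+\sum_a (q^{\beta(2^{a+1})}-q^{\beta(2^a)})/2^{a+1}$.

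The key step, and main obstacle, is the $2$-adic estimate: each term $q^{\beta(2^a)}(q^{\beta(2^{a+1})-\beta(2^a)}-1)/2^{a+1}$ is even unless $\beta(2^a)\log_2 q\le a+1$. Since $\beta(2^a)\ge 2^a$ whenever a block of size $>2^a$ exists (and otherwise the term vanishes), we need $2^a\log_2 q\le a+1$, forcing $a\in\{0,1\}$ with $q=2$; then check the finitely many configurations: $a=0$ needs $m_1=1$, $a=1$ needs $\beta(2)=2$. Working these out as in subcases (a),(b) gives oddness only when $m_1=m_2=1$, $m_3=0$, i.e. a single Jordan block of size $2$, so $n=\dim=2$ and $q=2$ (for non-unipotent $f$ one additionally needs the semisimple part trivial, which follows since $\dim=2$ is exhausted by the block).

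Finally identify such $f$ over $\bF_4$: the Dickson matrix has characteristic polynomial $x^2+1$ but is not the identity; from $\chi_{D_f}=x^2+(a_0+a_0^2)x+(a_0^3+b_0^3)$ we get $a_0\in\bF_2$, and $a_0=1$ forces $b_0=0$ (identity, excluded), so $a_0=0$, $f=bx^2$ with $b\in\bF_4^*$. Conversely $bx^2$ on $\bF_4$ fixes $0$ and the unique $x$ with $x=b^{-1}\cdot$... namely it is a nonidentity involution fixing exactly $q^{1}=2$ elements, hence a single transposition, which is odd.
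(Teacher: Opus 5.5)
Your proof is correct and is essentially the paper's argument: the count $q^{m_1}+\sum_{a}(q^{\beta(2^{a+1})}-q^{\beta(2^a)})/2^{a+1}$ with $\beta(2^a)=m_1+\cdots+m_{2^a}$, the $2$-adic estimate forcing $q=2$, $m_1=m_2=1$, $m_3=0$, and the Dickson-matrix identification $f=bx^2$ all match the paper's computation. Your passage to the unipotent $u^N$ via $\operatorname{sgn}(f)=\operatorname{sgn}(f^N)$ is a cleaner justification of the paper's step \eqref{4.2}, since each $f$-cycle of length $2^a j$ splits into $j$ cycles of length $2^a$ under $f^N$; it also absorbs the diagonalizable case. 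The parenthetical claim $s=1$ needs one more line: $s$ commutes with $u\neq 1$, so it preserves the line $\ker(u-1)\subset\bF_2^2$ and acts trivially on that line and on the quotient (as $\bF_2^{*}=\{1\}$), hence the semisimple $s$ is also unipotent and therefore equals the identity.
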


\subsection{The case when $\operatorname{char}(\bF_{q^n})\neq 2$}
In this case, $q$ is odd. Thus, the parity of $f$ is equivalent to the parity of $Par(f)\equiv 1 + \sum_{i=1}^{q^n} \frac{|K_i|}{i}\pmod 2$. By Lemma~\ref{lem2.4}, every element in $T$ is coprime with $p$. Moreover, by Theorem~\ref{thm:cycle}, we have
\begin{align}
Par(f)\equiv 1 + \sum_{i\geq 1} \frac{|K_i|}{i}\equiv 1+\sum_{i\in T}\sum_{j\geq 0}\frac{\left|K_{p^j\cdot i}\right|}{p^j\cdot i}\equiv\sum_{i\in T}\frac{\displaystyle\sum_{j\geq 0}|K_{p^j\cdot i}\setminus\{0\}|}{i}\pmod 2.\label{eq:parity-odd}
\end{align}
The last identity is because $0$ can only appear in $K_1$, thus $K_i\setminus\{0\}=K_i$ when $i>1$ and $|K_1\setminus\{0\}|=|K_1|-1$.

Set
\[
    \Phi \colon \mathbb{Z} \to 2^S, \quad i \mapsto \{\theta_j \in S : \theta_j \mid i\}.
\]
The key observation is the following.

\begin{lem}\label{lem4.3}
For a positive integer $i$ coprime with $p$, the sum $\displaystyle\sum_{j\geq 0}|K_{p^j\cdot i}\setminus\{0\}|$ depends only on the sequence $\{u_j\}_{1\leq j\leq s}$ and not on the entries of the Jordan form of $D_f$ off the diagonal .
\end{lem}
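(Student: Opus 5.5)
We prove the lemma by showing that the union $\bigcup_{j\geq 0}K_{p^j i}$ is a union of level sets of a single fixed-point space, whose size is determined by the eigenvalue data alone. Since $\gcd(i,p)=1$, an element $a$ lies in $K_{p^j i}$ for some $j\ge 0$ exactly when $m_a/p^{v_p(m_a)} = i$.

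\textbf{Step 1: reduce to a big fixed-point space.} Choose $N$ large enough that $p^N\geq n$; then $p^N$ is at least every Jordan block size $t_\lambda$. For any divisor $d$ of $i$, set $F(d p^N)$ to be the number of fixed points of $f^{(dp^N)}$. By \eqref{eq:fixpoint},
\[
F(dp^N)=\sum_{e\mid d}\sum_{j=0}^{N}|K_{p^j e}|.
\]
Moreover $K_{p^j e}=\emptyset$ for $j>N$, because Lemma~\ref{lem3.1} shows that fixed-point spaces stabilize once the $p$-part of the exponent reaches the block sizes. Hence the inner sum is $G(e):=\sum_{j\ge0}|K_{p^je}|$. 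Möbius inversion over the divisors of $i$ then gives
\[
G(i)=\sum_{d\mid i}\mu(i/d)\,F(dp^N).
\]

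\textbf{Step 2: evaluate $F(dp^N)$.} By Theorem~\ref{thm:cycle} (equivalently \eqref{3.2} with Lemma~\ref{lem3.1}), $F(dp^N)=q^{\beta(dp^N)}$, where
\[
\beta(dp^N)=\sum_{\ord(\lambda)\mid dp^N}\min(t_\lambda,p^{N})=\sum_{\ord(\lambda)\mid d} t_\lambda .
\]
The condition $\ord(\lambda)\mid dp^N$ reduces to $\ord(\lambda)\mid d$ because $\ord(\lambda)$ is coprime to $p$ by Lemma~\ref{lem2.4}. Summing the sizes of the Jordan blocks belonging to a fixed eigenvalue gives its algebraic multiplicity. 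Grouping the eigenvalues by order therefore yields
\[
\beta(dp^N)=\sum_{\theta_j\in\Phi(d)}u_j .
\]
This quantity depends only on $\Phi(d)$ and the weights $u_j$, and not on how the blocks are split or on any off-diagonal entries.

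\textbf{Step 3: conclude.} Substituting into Step 1,
\[
\sum_{j\ge0}|K_{p^j i}\setminus\{0\}| = \sum_{d\mid i}\mu(i/d)\,q^{\sum_{\theta_j\in\Phi(d)}u_j}-\delta_{i,1}.
\]
The term $-\delta_{i,1}$ accounts for removing $0$, which lies only in $K_1$. The right-hand side depends only on $\{u_j\}$ and the orders $\theta_j$, which proves the lemma.

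The only delicate point is justifying the truncation $j\le N$, that is, showing $K_{p^je}=\emptyset$ for large $j$. This follows because the fixed points of $f^{(ep^j)}$ already equal those of $f^{(ep^N)}$ for all $j\ge N$: by Lemma~\ref{lem3.1}, $\beta(ep^j)$ is constant for $j\geq N$, so no element has minimal period $ep^j$ with $j>N$.
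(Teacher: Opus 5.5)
Your proof is correct and follows essentially the paper's argument. In both, the key fact is that the number of fixed points of $f^{(p^N d)}$ (the paper takes the exponent $q^n d$) equals $q^{\sum_{\theta_j\in\Phi(d)}u_j}$, independent of the Jordan block sizes. The paper then recovers $\sum_{j\ge0}|K_{p^j i}|$ by induction on $|\Phi(i)|$, subtracting the contributions of the proper divisors $t\mid i$; this is just the recursive form of the M\"obius inversion you write out explicitly. The only other difference is truncation: the paper uses the trivial bound $K_j=\emptyset$ for $j>q^n$ in place of your stabilization argument.
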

\begin{proof}
By Theorem~\ref{thm:cycle}, if $i\notin T$, then $K_{p^j\cdot i}$ is empty for all nonnegative $j$. so the sum is $0$.

Now suppose $i\in T$. Write $\Phi(i)=\{\theta_{k_1},\cdots,\theta_{k_r}\}$. We proceed by induction on the size of $\phi(i)$.

\medskip
\noindent\textit{Base case.} If $r=0$, then $i=1$ and the sum is $0$. When $r = 1$, since $i=\theta_1$ is not divisible by other elements in $T$ and $K_j$ is empty when $j>q^n$, we have
\[
\sum_{j\geq 0} |K_{p^j \cdot i} \setminus \{0\}|
= \Bigl|\bigcup_{j\geq 0} K_{p^j \cdot i} \setminus \{0\}\Bigr|
= \Bigl|\bigcup_{j\mid q^ni} K_{j} \setminus \{0\}\Bigr|
= |\ker(D_f^{\,q^n \cdot i} - E) \setminus \{0\}|
= q^{u_{k_1}} - 1,
\]
which depends only on $u_{k_1}$.

\medskip
\noindent\textit{Inductive step.}
Assume the lemma holds for all positive integers whose index set has cardinality at most $r - 1$ ($r\geq 2$). Since $\gcd(i,p)=1$, every divisor of $q^ni$ can be written uniquely in the form $p^jt$, where $j\geq 0$ and $t\mid i$. The divisors with $t=i$ are precisely $p^ji$, while the remaining ones correspond to proper divisors $t\mid i$, $t\neq i$. We therefore have
\begin{align}
\sum_{j\geq 0} |K_{p^j \cdot i} \setminus \{0\}|= \sum_{j \mid q^n \cdot i} |K_j|
   - \sum_{\substack{t \mid i,\, t \neq i}}
     \sum_{j\geq 0} |K_{p^j \cdot t}|= q^{u_{k_1} + \cdots + u_{k_r}}
   - \sum_{\substack{t \mid i,\, t \neq i}}
     \sum_{j\geq 0} |K_{p^j \cdot t}|. \label{eq:induct-sum}
\end{align}
For each $t \mid i$ with $t \neq i$: if $t \notin T$ the inner sum vanishes; if $t \in T$, then $\Phi(t)$ has at most $r - 1$ elements, so the induction hypothesis applies and the inner sum depends only on the $\{u_j\}_{1\leq j\leq s}$. Hence the entire expression \eqref{eq:induct-sum} depends only on $\{u_j\}$, completing the induction.
\end{proof}

Let $D_f^\Delta$ denote a matrix in $M_n(\bF_q)$ similar to the diagonal of the Jordan form of $D_f$, then $D_f^\Delta$ induces a permutation of $\bF_q^n$. For each $a\in\bF_q^n$, define $m_a^\Delta$ to be the minimal positive integer such that ${D_f^\Delta}^{m_a}(a)=a$, $K_i^\Delta:=\{a\in\bF_q^n:m_a^\Delta=i\}$ and $C_i^\Delta=|\frac{K_i^\Delta}{i}$ be the number cycles with length $i$. Then $Par(f)\equiv 1 + \sum_{i\geq 1} \frac{|K_i|}{i}\pmod 2$. Lemma~\ref{lem3.1} gives $|K_i^\Delta| = 0$ whenever $i \notin T$. 

Now we consider the case $i\in T\setminus S$, let $\{\theta_{k_1},\cdots,\theta_{k_r}\}$ denotes $\Phi(i)$. If $\Phi(i)= \emptyset$, $K_i^\Delta=\{0\}$ and $\frac{|K_i\setminus\{0\}|}{i}=0$ is even. If $\Phi(i)$ is nonempty, since $i=\lcm(\theta_{k_1},\cdots,\theta_{k_r})\notin S$, we have $r\ge 2$.

\begin{lem}\label{lem4.4}
 For $i\in T\setminus S$ with $\Phi(i)=\{\theta_{k_1},\cdots,\theta_{k_r}\}$($r\geq 2$), write $i = 2^l \cdot t$ with $t$ odd. Then:
\begin{enumerate}
    \item $\dfrac{|K_i^\Delta \setminus \{0\}|}{i} \equiv 0 \pmod{2}$;
    \item for each $j \in \{1,2,\dots,r\}$ with $v_2(\theta_{k_j}) =l$, we have $2^l \mid q^{u_{k_j}} - 1$, moreover
    \[
        \frac{1}{2^l}\bigl|K_{\theta_{k_j}}^\Delta \setminus \{0\}\bigr|
        \equiv \frac{1}{2^l}(q^{u_{k_j}} - 1) \pmod{2}.
    \]
\end{enumerate}
\end{lem}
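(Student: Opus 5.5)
The plan is to compute $|K_i^\Delta|$ exactly as a sum over subsets of eigenvalue orders, and then read off the $2$-adic valuations. Since $D_f^\Delta$ is diagonalizable, I first decompose $\bF_q^n$ as $\bigoplus_{j=1}^{s} W_j$. Here $W_j$ is the sum of the eigenspaces of $D_f^\Delta$ for eigenvalues of order $\theta_j$. This set of eigenvalues is stable under the Frobenius $\lambda\mapsto\lambda^q$, which preserves multiplicative order, so $W_j$ is defined over $\bF_q$ and $\dim_{\bF_q}W_j=u_j$. Every nonzero $w\in W_j$ has period exactly $\theta_j$. Indeed, $D_f^\Delta$ acts diagonalizably on $W_j$ with eigenvalues of order exactly $\theta_j$, and $w$ is fixed by $(D_f^\Delta)^k$ iff $\lambda^k=1$ for every eigenvalue $\lambda$ on which $w$ has a nonzero component; every such $\lambda$ has order exactly $\theta_j$. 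For $v=\sum_j w_j$ with $w_j\in W_j$, the period of $v$ is therefore $\lcm\{\theta_j : w_j\neq 0\}$. This gives the key formula
\[
|K_m^\Delta\setminus\{0\}|=\sum_{\substack{\emptyset\neq A\subseteq\{1,\dots,s\}\\ \lcm\{\theta_j:j\in A\}=m}}\ \prod_{j\in A}\bigl(q^{u_j}-1\bigr).
\]
The same formula also re-derives that $K_m^\Delta$ is empty unless $m\in T$.

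Next I will prove the arithmetic fact $\theta_j\mid q^{u_j}-1$ for every $j$. The eigenvalues of order $\theta_j$ split into Frobenius orbits, and each orbit has size $d_j=\ord_{\theta_j}(q)$, the multiplicative order of $q$ modulo $\theta_j$. Hence $d_j\mid u_j$, and so $\theta_j\mid q^{d_j}-1\mid q^{u_j}-1$. In particular, if $v_2(\theta_j)=l$ then $2^l\mid q^{u_j}-1$, which is the first claim of part 2. Moreover, each factor $q^{u_j}-1$ is even because $q$ is odd.

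For part 1, let $i\in T\setminus S$ and let $A$ be any subset with $\lcm\{\theta_j:j\in A\}=i$. Then $|A|\ge2$, since a singleton would give $i\in S$. Also some $j_0\in A$ has $v_2(\theta_{j_0})=l$, since the $2$-adic valuation of an lcm is the maximum valuation. The factor $q^{u_{j_0}}-1$ contributes $2^l$, and any other factor contributes at least one more $2$, so every summand is divisible by $2^{l+1}$. Finally, $|K_i^\Delta|$ is a union of $i$-cycles, so $|K_i^\Delta|/i$ is an integer, and $t\cdot\bigl(|K_i^\Delta|/i\bigr)=|K_i^\Delta|/2^l$ is even. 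Since $t$ is odd, $|K_i^\Delta|/i$ is even, and $0\notin K_i^\Delta$ because $i>1$.

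For the congruence in part 2, I apply the same formula with $m=\theta_{k_j}$. The singleton $A=\{k_j\}$ contributes $q^{u_{k_j}}-1$. Any other admissible $A$ has $|A|\ge2$, because a singleton $\{j'\}$ with $\theta_{j'}=\theta_{k_j}$ forces $j'=k_j$ as the $\theta$'s are distinct. Such an $A$ also contains an index with $2$-adic valuation $v_2(\theta_{k_j})=l$, so as before its product is divisible by $2^{l+1}$. Dividing by $2^l$ gives the stated congruence mod $2$.

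The main obstacle I expect is the first step: justifying rigorously that the period of $v$ is the lcm of the orders of its nonzero components. This requires that the eigenvalue-order decomposition is $\bF_q$-rational and that nonzero vectors in $W_j$ have exact period $\theta_j$ even though they mix several conjugate eigenvalues. The remaining steps are bookkeeping.
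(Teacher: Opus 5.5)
Your proof is correct, and it follows a genuinely different route from the paper's.

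\textbf{The paper's route.} The paper never computes $|K_m^\Delta|$ in closed form. It uses only the fixed-point counts $\sum_{d\mid m}|K_d^\Delta|=q^{\sum_{\theta_j\mid m}u_j}$ (Lemma~\ref{lem3.1} with all Jordan blocks of size $1$). It splits the divisors of $i$, and of $\theta_{k_1}$, by $2$-adic valuation. It then runs a nested induction, on $r=|\Phi(i)|$ and inside that on $|\Phi(\theta_{k_1})|$, proving (1) and (2) together. The identity $\prod_j(x_j+1)-1-\sum_j x_j=\sum_{|A|\ge 2}\prod_{j\in A}x_j$ enters only at the very end.

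\textbf{Your route.} You split $\bF_q^n$ into the $\bF_q$-rational pieces $W_j$ cut out by eigenvalue order. From this you read off the exact formula $|K_m^\Delta\setminus\{0\}|=\sum_{\lcm\{\theta_j:j\in A\}=m}\prod_{j\in A}(q^{u_j}-1)$. Both parts then reduce to one-line $2$-adic estimates.

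\textbf{What each approach buys.} Your proof is shorter and non-inductive, and (1) and (2) become independent of each other. You also get the stronger congruence $|K_{\theta_j}^\Delta\setminus\{0\}|\equiv q^{u_j}-1 \pmod{2^{v_2(\theta_j)+1}}$ for every $j$, not just for the $\theta_{k_j}$ attached to a given $i$. Your Frobenius-orbit proof of $\theta_j\mid q^{u_j}-1$ is more transparent than the paper's, where $2^l\mid q^{u_{k_j}}-1$ is extracted from the induction. The price is the structural step you flag as the main obstacle, which the paper avoids by working only with fixed-point cardinalities.

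\textbf{You can skip the structural step.} Write $q^{\sum_{\theta_j\mid m}u_j}=\prod_{\theta_j\mid m}\bigl(1+(q^{u_j}-1)\bigr)$ and expand the product. The subsets $A$ whose $\theta_j$ all divide $m$ are exactly those with $\lcm\{\theta_j:j\in A\}$ dividing $m$. So M\"obius inversion of the fixed-point counts gives your formula directly, with $A=\emptyset$ accounting for the zero vector. This is, in effect, what the paper's recursive inclusion--exclusion computes.
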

\begin{proof}
Since $i\in T$, we have $i$ is the least common multiple of elements in $\Phi(i)$. Among $\theta_{k_1}, \ldots, \theta_{k_r}$, at least one achieves the maximum $2$-adic valuation $v_2(i)$. Without loss of generality, suppose
\[
v_2(\theta_{k_1}) = \cdots = v_2(\theta_{k_{r'}}) = v_2(i)
\quad \text{and} \quad
v_2(\theta_{k_{r'+j}}) < v_2(i) \text{ for } 1 \leq j \leq r - r',
\]
where $r' \geq 1$.

We prove~(1) and~(2) simultaneously by induction on $r$.

\medskip
\noindent\textit{Base case.} When $r = 2$, first we prove statement~(1). Since $i \notin S$, neither $\theta_{k_1} \mid \theta_{k_2}$ nor $\theta_{k_2} \mid \theta_{k_1}$. The assumption $\theta_j \nmid i$ for $j \notin \{k_1, k_2\}$ ensures $\theta_j \nmid \theta_{k_t}$ for $j \neq k_t$ ($t = 1, 2$), so that
\begin{align}\label{4.7}
|K_{\theta_{k_t}}^\Delta \setminus \{0\}|
= \sum_{j \mid \theta_{k_t}} |K_j^\Delta| - 1
= q^{u_{k_t}} - 1, \qquad t = 1, 2.
\end{align}
By the principle of inclusion-exclusion,
\begin{align}
|K_i^\Delta \setminus \{0\}|
&= \sum_{j \mid i} |K_j^\Delta|-|K_{\theta_{k_1}}^\Delta \setminus \{0\}|-|K_{\theta_{k_2}}^\Delta \setminus \{0\}|-1 \notag \\
&= q^{u_{k_1} + u_{k_2}}-(q^{u_{k_1}} - 1)-(q^{u_{k_2}} - 1) - 1 \notag \\
&= (q^{u_{k_1}} - 1)(q^{u_{k_2}} - 1). \label{eq:base-r2}
\end{align}
Since $v_2(\theta_{k_1}) = v_2(i)$,
\[
\frac{|K_i^\Delta \setminus \{0\}|}{i}\equiv \frac{(q^{u_{k_1}}-1)(q^{u_{k_2}}-1)}{2^{v_2(i)}}= \frac{|K_{\theta_{k_1}}^\Delta|}{ 2^{v_2(\theta_{k_1})}} \cdot (q^{u_{k_2}}-1).
\]
Since $\dfrac{|K_{\theta_{k_1}}^\Delta|}{\theta_{k_1}}=C_{\theta_{k_1}}^\Delta \in \mathbb{Z}$, we have $\theta_{k_1} \mid |K_{\theta_{k_1}}^\Delta|$ and in particular $2^{v_2(\theta_{k_1})} \mid |K_{\theta_{k_1}}^\Delta|$. Also $q$ is odd, and $q^{u_{k_2}} - 1$ is even. Hence
\[
2 \mid \frac{|K_{\theta_{k_1}}^\Delta|}{2^{v_2(\theta_{k_1})}} \cdot (q^{u_{k_2}}-1),
\]
so $\dfrac{|K_i^\Delta \setminus \{0\}|}{i}$ is even.

Now we prove statement~(2). For $j \in \{1, 2\}$ with $v_2(\theta_{k_j}) = l$, \eqref{4.7} gives
$|K_{\theta_{k_j}}^\Delta \setminus \{0\}| = q^{u_{k_j}} - 1$. Since $2^l \mid \theta_{k_j} \mid q^{u_{k_j}} - 1$, we have $2^l \mid q^{u_{k_j}} - 1$ and
\[
    \frac{1}{2^l}\bigl|K_{\theta_{k_j}}^\Delta \setminus \{0\}\bigr|
    = \frac{1}{2^l}(q^{u_{k_j}} - 1),
\]
so both sides have the same parity. This establishes~(2) for $r = 2$.

\medskip
\noindent\textit{Inductive step.}
Suppose both~(1) and~(2) hold for all integers in $T\setminus S$ with the image of $\Phi$ has size less than $r$. We first prove~(2), then use it to prove~(1).

To prove ~(2), it suffices to consider $j = 1$, if $v_2(\theta_{k_j})<v_2(i)$, then $\theta_{k_j}$ divides $m$ for some positive integer $m$ satisfying $|\Phi(m)|<|\Phi(i)|$. Hence, the conclusion follows from the inductive hypothesis. Suppose $\Phi(\theta_{k_1}) = \{\theta_{k_{i_1}}, \theta_{k_{i_2}}, \dots, \theta_{k_{i_a}}\}$ with $1 = i_1 < i_2 < \cdots < i_{a'} \leq r' < i_{a'+1} < \cdots < i_a$. We induct on $a$ (noting $a < r$).

When $a = 1$, the same argument as the base case gives $|K_{\theta_{k_1}}^\Delta \setminus \{0\}| = q^{u_{k_1}} - 1$, so $2^l \mid \theta_{k_1} \mid q^{u_{k_1}} - 1$ and $\frac{1}{2^l}|K_{\theta_{k_1}}^\Delta \setminus \{0\}| = \frac{1}{2^l}(q^{u_{k_1}} - 1)$, which have the same parity.

Suppose~(2) holds for $a - 1$ ($a \geq 2$). By dividing the divisors of $\theta_1$ into three classes, we have
\begin{align*}
    \frac{1}{2^l}\bigl|K_{\theta_{k_1}}^\Delta \setminus \{0\}\bigr|
    &= \frac{1}{2^l}\!\left(
        \sum_{j \mid \theta_{k_1}} |K_j^\Delta \setminus \{0\}|
        - \sum_{j \mid \theta_{k_1}/2} |K_j^\Delta \setminus \{0\}|
        - \sum_{\substack{2^l \mid j \mid \theta_{k_1}\,j\neq\theta_{k_1}}} |K_j^\Delta \setminus \{0\}|
    \right) \\
    &\equiv \frac{1}{2^l}\!\left(
        q^{u_{k_{i_1}}+\cdots+u_{k_{i_a}}}
        - q^{u_{k_{i_{a'+1}}}+\cdots+u_{k_{i_a}}}
        - \sum_{j=2}^{a'}\bigl|K_{\theta_{k_{i_j}}}^\Delta \setminus \{0\}\bigr|
    \right) \pmod{2},
\end{align*}
where the last congruence uses the induction hypothesis of~(1) (since $|\Phi(\theta_{k_{i_j}})| \leq a< r$ for $j \geq 2$). By the induction hypothesis on $a$, $\frac{1}{2^l}(q^{u_{k_{i_j}}}-1)$ is an integer and $\frac{1}{2^l}|K_{\theta_{k_{i_j}}}^\Delta \setminus \{0\}|\equiv \frac{1}{2^l}(q^{u_{k_{i_j}}}-1) \pmod{2}$ for $j = 2, \dots, a'$. Therefore,
\begin{align*}
    \frac{1}{2^l}\bigl|K_{\theta_{k_1}}^\Delta \setminus \{0\}\bigr|
    &\equiv \frac{1}{2^l}(q^{u_{k_{i_1}}+\cdots+u_{k_{i_{a'}}}}-1)
      - \sum_{j=2}^{a'} \frac{1}{2^l}(q^{u_{k_{i_j}}}-1) \\
    &\equiv \frac{1}{2^l}\!\left(
        (q^{u_{k_{i_1}}+\cdots+u_{k_{i_{a'-1}}}}-1)
        + q^{u_{k_{i_{a'}}}}-1
      \right)
      - \sum_{j=2}^{a'} \frac{1}{2^l}(q^{u_{k_{i_j}}}-1) \\
    &\equiv \cdots \\
    &\equiv \sum_{j=1}^{a'} \frac{1}{2^l}(q^{u_{k_{i_j}}}-1)
      - \sum_{j=2}^{a'} \frac{1}{2^l}(q^{u_{k_{i_j}}}-1) \\
    &\equiv \frac{1}{2^l}(q^{u_{k_1}}-1) \pmod{2}.
\end{align*}
Hence by induction on $a$, $2^l \mid |K_{\theta_{k_1}}^\Delta \setminus \{0\}|$ and $\frac{1}{2^l}|K_{\theta_{k_j}}^\Delta \setminus \{0\}|\equiv \frac{1}{2^l}(q^{u_{k_j}}-1) \pmod{2}$ for all $j \in [r']$.

\medskip
Now we look back to~(1). By dividing the divisors of $i$ into three classes,
\begin{align*}
    \frac{|K_i^\Delta \setminus \{0\}|}{i}
    &= \frac{1}{i}\!\left(
        \sum_{j \mid i} |K_j \setminus \{0\}|
        - \sum_{j \mid 2^{l-1}i} |K_j \setminus \{0\}|
        - \sum_{\substack{j = 2^l m,\, m \mid i,\, m \neq i}}
          |K_j \setminus \{0\}|
    \right) \\
    &= \frac{1}{i}\!\left(
        q^{u_{k_1}+\cdots+u_{k_r}}
        - q^{u_{k_{r'+1}}+\cdots+u_{k_r}}
        - \sum_{\substack{j = 2^l m,\, m \mid i,\, m \neq i}}
          |K_j \setminus \{0\}|
    \right),
\end{align*}
where when $l = 1$ the sum over $j \mid 2^{l-1}t$ is defined to be $0$. For $j = 2^l m$ with $m \mid t$, $m \neq t$: if $j \notin T$ then $K_j = \emptyset$; if $j \in T$ then $\Phi(j) \subsetneq \Phi(i)$, so $|\Phi(j)| < r$, and by the induction hypothesis on $r$, $\frac{|K_j \setminus \{0\}|}{j}$ is even. Hence
\begin{align*}
    \frac{|K_i \setminus \{0\}|}{i}
    &\equiv \frac{1}{i}\!\left(
        q^{u_{k_{r'+1}}+\cdots+u_{k_r}}
        (q^{u_{k_1}+\cdots+u_{k_{r'}}}-1)
        - \sum_{j=1}^{r'}\bigl|K_{\theta_{k_j}} \setminus \{0\}\bigr|
    \right)\\
&\equiv \frac{1}{2^l}\!\left(
        q^{u_{k_{r'+1}}+\cdots+u_{k_r}}
        (q^{u_{k_1}+\cdots+u_{k_{r'}}}-1)
        - \sum_{j=1}^{r'}\bigl|K_{\theta_{k_j}} \setminus \{0\}\bigr|
    \right) \pmod{2}.
\end{align*}
Applying~(2), we substitute
$|K_{\theta_{k_j}} \setminus \{0\}| \equiv q^{u_{k_j}} - 1 \pmod{2^{l+1}}$
for $j = 1, \dots, r'$, giving
\begin{align*}
    \frac{|K_i \setminus \{0\}|}{i}
    &\equiv \frac{1}{2^l}\!\left(
        q^{u_{k_{r'+1}}+\cdots+u_{k_r}}
        (q^{u_{k_1}+\cdots+u_{k_{r'}}}-1)
        - \sum_{j=1}^{r'}(q^{u_{k_j}}-1)
    \right) \pmod{2} \\
    &\equiv \frac{1}{2^l}\!\left(
        \prod_{j=1}^{r'}(q^{u_{k_j}}-1+1) - 1
        - \sum_{j=1}^{r'}(q^{u_{k_j}}-1)
    \right) \pmod{2}.
\end{align*}
Setting $x_j = q^{u_{k_j}} - 1$, then each $x_j$ is even with $2^l \mid x_j$.
\[
    \prod_{j=1}^{r'}(x_j + 1) - 1 - \sum_{j=1}^{r'} x_j
    = \sum_{\substack{A \subseteq [r'] \\ |A| \geq 2}} \prod_{j \in A} x_j,
\]
and each term in the sum is even and divisible by $2^{2l}$, so the whole expression is divisible by $2^{l+1}$. Therefore
\[\frac{1}{2^l}\!\left(
        \prod_{j=1}^{r'}(q^{u_{k_j}}-1+1) - 1
        - \sum_{j=1}^{r'}(q^{u_{k_j}}-1)
    \right) \equiv 0 \pmod{2},\]
and $\frac{|K_i \setminus \{0\}|}{i}$ is even. This completes the induction and the proof of both~(1) and~(2).
\end{proof}

 Combining Lemma~\ref{lem4.3} and Lemma~\ref{lem4.4}, the parity of $f$ equals the parity of
\[
  1+\left(\sum_{i=1}^{s}\frac{q^{u_i}-1}{2^{v_2(\theta_i)}}-1\right)
  \equiv
  \sum_{i=1}^{s}\frac{q^{u_i}-1}{2^{v_2(\theta_i)}}
  \pmod{2},
\]
which completes the proof of Theorem~\ref{thm:parity}.

\section{An algorithm for parity testing}\label{sec5}
Theorem~\ref{thm:parity} provides a practical algorithm to compute the parity of linearized permutation polynomials. In this section we outline algorithms for parity testing without considering larger fields avoids explicitly passing to larger fields.

To evaluate the sum $\sum_{i=1}^{s}\frac{q^{u_i}-1}{2^{v_2(\theta_i)}}$ appearing in Theorem~\ref{thm:parity}. we choose an odd integer $l$ and integer $t$ satisfies $2^t\cdot l$ divisible by all $\theta_i$ and exploit the identity.
\begin{lem}\label{lem5.1}
Assume that \(q\) is odd. Let \(t\geq 0\) and let \(l\) be a positive odd integer such that $\chi_{D_f}(X)\mid X^{2^t l}-1$, where $\chi_{D_f}$ denotes the characeteristic polynomialof $D_f$. Define
\[
  d_k := \deg\!\left(\gcd\!\left(\chi_{D_f},\; x^{2^k l}-1\right)\right),
  \qquad k = 0, 1, \dots, t.
\]
 Then
\begin{equation}\label{eq:parity-sum}
\sum_{i=1}^{s} \frac{q^{u_i}-1}{2^{v_2(\theta_i)}}
  \;\equiv\;
  \sum_{k=1}^{t} \frac{q^{d_k - d_{k-1}}-1}{2^k}
  \pmod{2}.
\end{equation}
\end{lem}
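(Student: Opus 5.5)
The plan is to identify $d_k$ in terms of the order weights $u_i$, regroup the left-hand side of \eqref{eq:parity-sum} by $2$-adic valuation, and then apply the same binomial-expansion trick used at the end of Lemma~\ref{lem4.4}.

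\emph{Step 1: interpreting $d_k$.} Write $N=2^tl$ and $l=p^e l'$ with $p\nmid l'$. Then $X^{2^kl}-1=(X^{2^kl'}-1)^{p^e}$, and $X^{2^kl'}-1$ is separable. So every root $\lambda$ of $X^{2^kl}-1$ has multiplicity exactly $p^e$. These roots are precisely the $\lambda$ with $\ord(\lambda)\mid 2^kl$, equivalently $\ord(\lambda)\mid 2^kl'$, since orders are prime to $p$ by Lemma~\ref{lem2.4}. The hypothesis $\chi_{D_f}\mid X^{N}-1$ gives two facts:
\begin{itemize}
\item every eigenvalue has order dividing $2^tl'$;
\item every eigenvalue has algebraic multiplicity at most $p^e$.
\end{itemize}
Hence the gcd keeps each eigenvalue of order dividing $2^kl$ with its full multiplicity, and
\[
d_k=\sum_{\theta_i\mid 2^kl}u_i=\sum_{v_2(\theta_i)\le k}u_i .
\]
The second equality holds because each $\theta_i$ divides $2^tl'$, so its odd part divides $l'$. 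Consequently, for $1\le k\le t$,
\[
d_k-d_{k-1}=U_k:=\sum_{v_2(\theta_i)=k}u_i ,
\]
and every $\theta_i$ has $v_2(\theta_i)\le t$.

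\emph{Step 2: regrouping the left-hand side.} Split $\sum_i (q^{u_i}-1)/2^{v_2(\theta_i)}$ according to $k=v_2(\theta_i)$.
\begin{itemize}
\item The terms with $k=0$ are $q^{u_i}-1$, which are even because $q$ is odd. They can be dropped.
\item For $k\ge1$ it then suffices to show
\[
\sum_{v_2(\theta_i)=k}\frac{q^{u_i}-1}{2^k}\equiv\frac{q^{U_k}-1}{2^k}\pmod 2 .
\]
\end{itemize}

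\emph{Step 3: the divisibility $2^k\mid q^{u_i}-1$ (the one genuinely arithmetic step).} I would use that $D_f$ is similar to a matrix over $\bF_q$ (Theorem~\ref{thm2.3}), so $\chi_{D_f}\in\bF_q[X]$.
\begin{itemize}
\item The eigenvalues of a given order $\theta_i$ form a union of full Galois orbits over $\bF_q$, with constant multiplicity on each orbit.
\item Each orbit has size $\ord_{\theta_i}(q)$, the multiplicative order of $q$ modulo $\theta_i$.
\item Hence $\ord_{\theta_i}(q)\mid u_i$, so $\theta_i\mid q^{u_i}-1$, and in particular $2^k\mid q^{u_i}-1$.
\end{itemize}

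\emph{Step 4: the expansion.} Put $x_i=q^{u_i}-1$ for the indices with $v_2(\theta_i)=k$. Then
\[
q^{U_k}-1=\prod_i(1+x_i)-1=\sum_i x_i+\sum_{|A|\ge2}\prod_{i\in A}x_i .
\]
Each product over $|A|\ge 2$ is divisible by $2^{2k}$, and $2^{2k}$ is divisible by $2^{k+1}$ since $k\ge1$. Dividing by $2^k$ gives the congruence in Step 2. Summing over $k=1,\dots,t$ then yields \eqref{eq:parity-sum}.

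The main obstacle is Step 1. The multiplicity bookkeeping only works because $\chi_{D_f}\mid X^N-1$ caps each multiplicity at $p^{v_p(l)}$, so the gcd degree records the order weights $u_i$ rather than the number of distinct eigenvalues. This point must be checked carefully. The divisibility in Step 3 is the other place where care is needed.
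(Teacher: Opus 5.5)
Your proof is correct and follows the same route as the paper: group the terms by $v_2(\theta_i)$, merge each group using $\frac{q^a-1}{2^j}+\frac{q^b-1}{2^j}\equiv\frac{q^{a+b}-1}{2^j}\pmod 2$ (your Step 4), identify the merged exponents with $d_k-d_{k-1}$, and discard the even $k=0$ term. The paper leaves implicit the two facts you check explicitly: first, $d_k=\sum_{v_2(\theta_i)\le k}u_i$, which relies on the multiplicity cap $p^{v_p(l)}$ forced by $\chi_{D_f}\mid X^{2^tl}-1$; second, $2^{v_2(\theta_i)}\mid q^{u_i}-1$, which follows from your Galois-orbit argument. Your write-up is therefore a more complete version of the same argument.
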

\begin{proof}
Grouping the terms in the sum by the $2$-adic valuation of $\theta_i$:
\begin{equation}\label{eq:sum-identity}
\begin{aligned}
  \sum_{i=1}^{s} \frac{q^{u_i}-1}{2^{v_2(\theta_i)}}
&=\sum_{j\geq 0}\sum_{v_2(\theta_i)=j} \frac{q^{u_i}-1}{2^j}\\
&\equiv \sum_{j\geq 0} \frac{q^{\sum_{v_2(\theta_i)=j}u_i}-1}{2^j}\\
&= q^{d_0}-1+\sum_{k=1}^{t} \frac{q^{d_k-d_{k-1}}-1}{2^k}\\
&\equiv\sum_{k=1}^{t} \frac{q^{d_k-d_{k-1}}-1}{2^k}\pmod 2.
\end{aligned}
\end{equation}
where the second congruence uses $\frac{q^a - 1}{2^j}+\frac{q^b-1}{2^j}\equiv \frac{q^{a+b}-1}{2^j}\pmod{2}$ for odd $q$, and the last step uses $q^{d_0}$ is odd.
\end{proof}
We present the algorithm in full below.

\begin{algorithm}\label{alg:parity-dickson}
\textit{Input:}
$f(x) = \sum_{i=0}^{n-1} a_i x^{q^i} \in \bF_{q^n}[x]$.

\textit{Output:} $f$ is not a PP, or $f$ is a PP together with its parity.
\begin{enumerate}
\item Using the coefficients $a_0, a_1, \dots, a_{n-1}$, construct the Dickson matrix $D_f$ as defined in~(\ref{1.1}).

\item Determine if $D_f$ is invertible. If $\det(D_f) = 0$, return that $f$ is \textbf{not} a PP,
and the algorithm terminates.

\item Decide whether $q$ is even.
\begin{itemize}
  \item If $q$ is even and $q = n = 2$, $f(x) = bx^2$ for some $b\in\bF_4^*$, return that $f$ is an \textit{odd} PP.
  \item If $q$ is even and $q \neq 2$ or $n \neq 2$ or $f(x)$ is not of the form $bx^2$ for any $b\in\bF_4^*$, return that $f$ is an \textit{even} PP.
\end{itemize}

\item Compute the characteristic polynomial
$\chi_{D_f} = \det(xE_n - D_f)$.

\item Compute the order $m$ of $\chi_{D_f}$,
i.e., the minimal positive integer $m$ such that $\chi_{D_f}(x) \mid x^m - 1$. Factor $m = 2^t \cdot l$, where $l$ is odd.

\item Compute the degrees
\[
  d_k := \deg\!\left(\gcd\!\left(\chi_{D_f},\; x^{2^k l}-1\right)\right),
  \quad k = 0, \dots, t.
\]

\item Compute the sum
\[
  S = \sum_{k=1}^{t} \frac{q^{d_k - d_{k-1}} - 1}{2^k}.
\]
If $S$ is odd, return that $f$ is an \textit{odd} PP; otherwise return that $f$ is an \textit{even} PP.
\end{enumerate}
\end{algorithm}

Now we present an example to illustrate Algorithm~\ref{alg:parity-dickson}.
\begin{exam}\label{exam5.1}
Let $q = n = 5$,  and $f(x) = 2x^{625} + x^{125} + 4x^{25} + x^5 + 4x$.
\medskip
The associated Dickson matrix is
$$
D_f =
\begin{pmatrix}
4^{5^0} & 1^{5^0} & 4^{5^0} & 1^{5^0} & 2^{5^0} \\
2^{5} & 4^{5} & 1^{5} & 4^{5} & 1^{5} \\
4^{5^2} & 2^{5^2} & 4^{5^2} & 1^{5^2} & 4^{5^2} \\
1^{5^3} & 4^{5^3} & 2^{5^3} & 4^{5^3} & 1^{5^3} \\
4^{5^4} & 1^{5^4} & 4^{5^4} & 2^{5^4} & 4^{5^4}
\end{pmatrix}
=
\begin{pmatrix}
4 & 1 & 4 & 1 & 2 \\
2 & 4 & 1 & 4 & 1 \\
1 & 2 & 4 & 1 & 4 \\
4 & 1 & 2 & 4 & 1 \\
1 & 4 & 1 & 2 & 4
\end{pmatrix},
$$
This matrix is invertible, so $f$ is a permutation polynomial of $\bF_{5^5}$. Since $q=5$ is odd, we determine the parity using Algorithm~\ref{alg:parity-dickson}. The characteristic polynomial is $\chi_{D_f}(x) = x^5 + 3$, with order $m = 20 = 2^2 \cdot 5$; thus $t=2$ and $l=5$. The degrees $d_1 = \deg\gcd(\chi_{D_f}, x^5-1) = 0$ and $d_2 = \deg\gcd(\chi_{D_f}, x^{20}-1) = 5$ give
$$
S = \frac{q^{d_1-d_0}-1}{2} + \frac{q^{d_2-d_1}-1}{4}
  = \frac{5^{0-0}-1}{2} + \frac{5^{5-0}-1}{4}
  = 781,
$$
which is odd. Hence $f$ is an odd permutation polynomial of $\bF_{5^5}$.

\end{exam}
When the linearized polynomial $f(x) = \sum_{i=0}^{n-1} a_i x^{q^i}$ has all coefficients in the base field $\bF_q$, the Dickson matrix $D_f$ is a polynomial in the companion matrix
$$J := \begin{pmatrix} 0 & E_{n-1} \\ 1 & 0 \end{pmatrix}$$ with coefficients $a_0, \dots, a_{n-1}$. Since the eigenvalues of $J$ are the $n$-th roots of unity $\omega_1, \dots, \omega_n$ in $\overline{\bF}_q$, the eigenvalues of $D_f$ are $g(\omega_i)$ for $i = 1, \dots, n$, where
$g(x) = \sum_{i=0}^{n-1} a_i x^i \in \bF_q[x]$ is the associated polynomial of $f$. Consequently, the characteristic polynomial $\chi_{D_f}$ and all subsequent GCD computations reduce to polynomial arithmetic in $\bF_q[x]/(x^n - 1)$, yielding the following simplified algorithm.

\begin{algorithm}\label{algorithm5.4}
\textit{Input:}
$f(x) = \sum_{i=0}^{n-1} a_i x^{q^i} \in \bF_q[x]$,
a linearized polynomial over $\bF_{q^n}$.

\textit{Output:} $f$ is not a PP, or $f$ is a PP together with its parity.

\begin{enumerate}
    \item Construct the associate polynomial
    $g(x) = a_0 + a_1 x + \cdots + a_{n-1} x^{n-1} \in \bF_q[x]$.

    \item Determine if $g(x)$ and $x^n - 1$ are coprime.
    If $\gcd(g(x),\, x^n - 1) \neq 1$, return $0$ and the algorithm terminates
    (i.e., $f$ is not a PP).

    \item Decide whether $q$ is even.
    \begin{itemize}
        \item If $q$ is even and $q = n = 2$, $f(x) = bx^2$ for some $b\in\bF_4^*$,
              return that $f$ is an \textit{odd} PP.
        \item If $q$ is even and $q \neq 2$, $n \neq 2$ or $f(x)$ is not of the form $bx^2$ for any $b\in\bF_4^*$,
              return that $f$ is an \textit{even} PP.
    \end{itemize}

    \item Compute the minimal positive integer $m$ such that
    $x^n - 1 \mid g(x)^m - 1$.
    Factor $m = 2^t \cdot l$, where $l$ is odd.

    \item Compute the degrees
    $$
        d_k := \deg\!\left(\gcd\!\left(x^n - 1,\; g(x)^{2^k l} - 1\right)\right),
        \quad k = 0, \dots, t.
    $$

    \item Compute the sum
    $$
        S = \sum_{k=1}^{t} \frac{q^{d_k - d_{k-1}} - 1}{2^k}.
    $$
    If $S$ is odd, return that $f$ is an \textit{odd} PP; otherwise return that $f$ is an \textit{even} PP.
\end{enumerate}
\end{algorithm}

Now we test the parity of $f(x)$ in Example~\ref{exam5.1} again for illustrating Algorithm~\ref{algorithm5.4}
\begin{exam}\label{exam5.2}
Let $q=5$ and $f(x) = 4x + x^5 + 4x^{25} + x^{125} + 2x^{625} \in \bF_5[x]$. We test the parity of $f$ using Algorithm~\ref{algorithm5.4}. The associated polynomial is $g(x) = 4 + x + 4x^2 + x^3 + 2x^4 \in \bF_5[x]$, and $\gcd(g(x), x^5-1) = 1$, so $f$ is a permutation polynomial of $\bF_{5^5}$. Since $q=5$ is odd, we proceed. The minimal positive integer $m$ satisfying $x^5-1 \mid g(x)^m-1$ in $\bF_5[x]/(x^5-1)$ is $m=20=2^2\cdot 5$, so $t=2$ and $l=5$. Computing $d_1 = \deg\gcd(x^5-1, g(x)^{10}-1) = 0$ and $d_2 = \deg\gcd(x^5-1, g(x)^{20}-1) = 5$, we obtain
$$
S = \frac{5^{d_1-d_0}-1}{2} + \frac{5^{d_2-d_1}-1}{4}
  = \frac{5^0-1}{2} + \frac{5^5-1}{4}
  = 0 + 781 = 781,
$$
which is odd. Hence $f$ is an odd permutation polynomial of $\bF_{5^5}$, consistent with the result obtained in Example~\ref{exam5.1}
\end{exam}
\begin{rem}
   When the coefficients of $f$ are restricted to $\bF_q$, Algorithm~\ref{algorithm5.4} can be recovered relatively easily from the cycle structure analysis of linearized permutation polynomials over $\bF_{q^n}$ established by Mullen and Vaughan~\cite{MR959697} and may thus be viewed as a direct application of their results.
\end{rem}
\section{Conclusion}\label{sec6}
In this paper, we present results on the cycle structure and parity of linearized permutation polynomials over finite fields using Dickson matrices. Building on the cycle structure, Theorem~\ref{thm:parity} gives a complete parity criterion for a linearized permutation polynomial. In characteristic two, a $q$-linearized permutation polynomial over $\bF_{q^n}$ is odd precisely when $q=n=2$ and $f(x)=bx^2$ with $b\in\bF_4^*$. In odd characteristic, its parity is determined by the multiplicative orders of the eigenvalues of the Dickson matrix and their total algebraic multiplicities. This criterion also leads to an algorithm for the parity testing.
It would be interesting to extend the method in this work to other class of permutation polynomials, as the work on Dickson polynomials~\cite{MR1159877}.

\section{Declaration on the Use of AI}

The results of this paper, including all mathematical derivations and proofs, were conceived, developed, and verified by the authors. During the preparation of this manuscript, AI-assisted tools were used to help draft the Introduction and to improve the clarity and readability of the language throughout the paper. No mathematical reasoning, proof steps, or conclusions were generated by these tools. All changes suggested with AI assistance were reviewed by the authors, who take full responsibility for the content of this work.

\bibliographystyle{plain}

\bibliography{ParityPP}

\end{document}